\definecolor{Green}{RGB}{0,128,0}
\newtheorem{Def}{Definition}[section]
\newtheorem{lemma}[Def]{Lemma}
\newtheorem{hypothesis}[Def]{Hypothesis}
\newtheorem{corollary}[Def]{Corollary}
\newtheorem{theorem}[Def]{Theorem}
\newtheorem{example}[Def]{Example}
\newtheorem{proposition}[Def]{Proposition}
\newtheorem{remark}[Def]{Remark}
\definecolor{Green}{RGB}{46 139 87} 
\newcommand{\ud}{\mathrm d}
\newcommand{\R}{\mathbb{R}}
\numberwithin{equation}{section}
\newcommand{\E}{\mathbb{E}}\allowdisplaybreaks[4]
\begin{document}

\title[Dynamic domain semi-Lagrangian method]{A dynamic domain semi-Lagrangian method for\\ stochastic Vlasov equations}

\subjclass[2010]{35Q83, 60H15, 60H35, 65C30, 65J08.}
\author{Jianbo Cui}
\address{Department of Applied Mathematics, The Hong Kong Polytechnic University, Hung Hom, Kowloon, HongKong}
\email{jianbo.cui@polyu.edu.hk}

\author{Derui Sheng}
\address{Department of Applied Mathematics, The Hong Kong Polytechnic University, Hung Hom, Kowloon, HongKong}
\email{sdr@lsec.cc.ac.cn (Corresponding author)}

\author{Chenhui Zhang}
\address{Department of Applied Mathematics, The Hong Kong Polytechnic University, Hung Hom, Kowloon, HongKong }
\email{czhang9@163.com}
\author{Tau Zhou}
\address{Key Laboratory of Computing and Stochastic Mathematics (Ministry of Education), School of Mathematics and Statistics, Hunan Normal University, Changsha, Hunan 410081, P. R. China}
\email{zt@hunnu.edu.cn}

\thanks{This work is supported by the Hong Kong Research Grant Council GRF grant 15302823, NSFC/RGC Joint Research Scheme N-PolyU5141/24, NSFC grant 12301526, internal funds (P0039016, P0045336) from Hong Kong Polytechnic University, MOST National Key R\&D Program No. 2024FA1015900, and the
CAS AMSS-PolyU Joint Laboratory of Applied Mathematics.
}

\keywords{Dynamic domain adaptation strategy, semi-Lagrangian method, stochastic Vlasov--Poisson equation, transport noise, volume-preserving integrator}

\begin{abstract}
We propose a dynamic domain semi-Lagrangian method for stochastic Vlasov equations driven by transport noises, which arise in plasma physics and astrophysics. 
This method combines the volume-preserving property of stochastic characteristics with a dynamic domain adaptation strategy and a reconstruction procedure.
It offers a substantial reduction in computational costs compared to the traditional semi-Lagrangian techniques for stochastic problems.
Furthermore, we present the first-order convergence analysis of the proposed method, partially addressing the conjecture in \cite{BC24} on the convergence order of numerical methods for stochastic Vlasov equations.
Several numerical tests are provided to show good performance of the proposed method.
\end{abstract}

\maketitle
\section{Introduction}
In astrophysics and plasma physics, the kinetic equation is a fundamental framework for describing collisionless plasmas, which models the evolution of charged particles within an electromagnetic field \cite{EO14vp,FP84}. 
Beyond magnetic and electric effects, particles are also affected by random forces, which often arise from the thermal fluctuation, turbulence, or external noise 
 (see e.g., \cite{BB21,GIVW10}). In such scenarios, the distribution of particles is governed by the stochastic Vlasov equation with transport noise (see, e.g., \cite{BC24, FFPV17})
\begin{align}\label{eq:Vla}
	&\ud_tf +\left(v\cdot \nabla_xf+E(t,x)\cdot\nabla_vf\right)\ud t
	+\sum_{k=1}^K\sigma_k(x)\cdot\nabla_v f\odot\ud \beta_k(t)=0
\end{align}
where the time $t\ge 0$, the position $x\in\mathbb{T}^d$ ($d$-dimensional torus) and the velocity $v\in\R^d$. Here, $\beta_k$'s are independent Brownian motions, $\sigma_k$'s are vector fields on $\mathbb{T}^d$, $\odot$ denotes the Stratonovich product, and $\cdot$ denotes the inner product in $\R^d$ (see section \ref{S2} for more details).
When the magnetic effect are neglected, $E:[0,T]\times\mathbb{T}^d\to \R^d$ represents the electric field. It can either be 
externally imposed (i.e., $E$ is independent of $f$) or determined self-consistently via the Poisson equation \cite{GR96}
 \begin{equation}\label{eq:VPE}
E(t,x)=-\nabla u(t,x),\quad -
\Delta u(t,x)=\int_{\R^d}f(t,x,v)\ud v-1, 
\end{equation}
where $u$ is the electric potential. Physically, the stochastic Vlasov--Poisson equation \eqref{eq:Vla}-\eqref{eq:VPE} describes charged particles in a turbulent regime subject to a non-self-consistent stochastic electric field \cite{BB21}. It has been reported in \cite{DFV14} that the transport noise ‘$\sum_{k=1}^K\sigma_k(x)\cdot\nabla_v f\odot \ud \beta_k(t)$’ has a regularizing effect, preventing the collapse of the particle system. 

The absence of analytic solutions for Vlasov-type equations, especially in the nonlinear regime, has driven extensive research into their numerical study.
A widely used approach is the semi-Lagrangian method (also known as the Eulerian method), which propagates the solution along characteristics  and uses a reconstruction procedure to recover the solution from grid points in phase space at each time step. Popular reconstruction techniques include the interpolation \cite{BN04, CK76}, finite difference methods \cite{QS11FDM}, and discontinuous Galerkin methods \cite{EO14, QS11}. Another prevalent approach is the particle-in-cell method, which approximates the continuous density using a weighted Klimontovich representation (see, e.g., \cite{LY23, LCHPS24, VA91}). Among these studies, numerically preserving conservative physical quantities is important for enhancing stability and reliability in long time simulations (see, e.g., \cite{EJ21}).

Despite fruitful results in deterministic settings, the scientific computing and numerical analysis of stochastic Vlasov equations are far from well-understood. To the best of our knowledge, the only existing work is \cite{BC24}, which introduces a Lie--Trotter splitting integrator as a temporal semi-discretization for linear Vlasov equations with stochastic perturbations. The authors conjecture in \cite{BC24} that this splitting scheme achieves first-order convergence in the mean-square sense. For more complex nonlinear problems, a full discretization is essential and warrants further investigation. These considerations are the primary motivations for our current work. 

To propose an effective full discretization for \eqref{eq:Vla}, we encounter several challenges. 
First, we cannot directly apply  the traditional semi-Lagrangian method (see, e.g., \cite{BN04, EO14}) that truncates the velocity space into a preset bounded domain.
Indeed, stochastic perturbations in \eqref{eq:Vla}
cause the
particle acceleration to behave like white noise, which is not uniformly bounded over time, as reflected in the stochastic characteristics (see, e.g., \cite{KH19}):
\begin{equation}\label{eq:SDE}
\left\{
\begin{split}
\ud X_t&=V_t \ud t,\\
\ud V_t&=E(t,X_t) \ud t+\sum_{k=1}^K\sigma_k(X_t)\ud \beta_k(t).
\end{split}
\right.
\end{equation}
Second, 
the \emph{a 
priori} estimate of \eqref{eq:SDE} suggests that the diameter of the support of the full discretization
 for \eqref{eq:Vla} 
 is nearly proportional to $\tau^{-\frac12}$ with $\tau>0$ being the time stepsize (see Remark \ref{rem:1}). This will lead to 
an expensive computational cost 
since the
 computational domain may expand as the diameter of the support increases.
 Third,
the transport noise 
causes the averaged total energy to increase over time, while the mass and averaged momentum remain invariant for the stochastic Vlasov--Poisson equation (see section \ref{S2}). Numerically
capturing these physical features presents another challenge, since standard discretizations, 
such as the full discretization related to the Euler--Maruyama method of  \eqref{eq:SDE},
 may not accurately approximate these evolution laws ({see Fig. \ref{Fig:Norm} for a comparison)}.

To address these challenges, we propose a novel numerical method that combines the semi-Lagrangian approach with a dynamic domain adaptation strategy.  
We first truncate the velocity domain into a bounded one based on a preset threshold $\epsilon_0$, and adaptively update the computational velocity domain at each time iteration.
This approach significantly reduces the computational cost and enhances the efficiency of the traditional semi-Lagragian method for stochastic problems (see Tab.\ \ref{Tabletime} for a comparison). 
Then in the dynamic domain in phase space, we  use volume-preserving integrators to propagate the inverse flow of the stochastic characteristics \eqref{eq:SDE},
which effectively reduces errors in the integral preservation (see section \ref{S5.3}). Meanwhile, the reconstruction procedure is  implemented using techniques such as a positivity-preserving Lagrange first-order interpolation. Consequently, the proposed method performs well in simulating the evolution of physical quantities, such as the mass, averaged momentum, and averaged energy (see section \ref{S5.4}).

In Theorem \ref{thm:DSLC}, we further present the convergence analysis of the proposed method, which reveals conditions on the time stepsize $\tau$, threshold $\epsilon_0$, and spatial stepsizes to ensure the stability and accuracy. Achieving first-order convergence in time hinges on the introduction of an auxiliary process as an intermediate, as traditional approaches typically yield only half-order convergence in time (see Remark \ref{rem:secconvergence}). 
This gives a positive answer to the conjecture in \cite{BC24} regarding  the temporal convergence rate of numerical schemes for stochastic Vlasov equations, particularly in the case of transport noise.

The rest of this paper is organized as follows. Section \ref{S2} introduces the necessary notations and examines the evolution of physical quantities associated with the stochastic Vlasov equation. We propose the dynamic domain semi-Lagrangian method (i.e., Algorithm \ref{Algo:2}) in section \ref{S3} and then present its convergence analysis in section \ref{S4}. 
Finally, in section \ref{S:NE}, several numerical experiments are provided to validate the accuracy and efficiency of Algorithm \ref{Algo:2}, as well as the effect of transport noise on the evolution of physical quantities.

\section{Preliminaries}\label{S2}
In this section, we present the preliminaries and the evolution law of physical quantities associated with \eqref{eq:Vla}. Let us begin with some notations.
For $p\in[1,\infty)$, let $L_{x,v}^{p}$ be the space of all $p$th integrable functions $g:\mathbb{T}^d\times\R^d\to\R$ equipped with the usual norm 
\begin{align*}
\|g\|_{L_{x,v}^{p}}:=\left(\int_{\mathbb{T}^d\times\R^d}|g(x,v)|^p\ud x\ud v\right)^{\frac1p}
\end{align*}
where $\mathbb{T}^d:=(\R/L\mathbb{Z})^d$ is a $d$-dimensional torus with some positive constant $L$.
Denote by $L_{x,v}^{\infty}$ the space of all measurable functions $g:\mathbb{T}^d\times\R^d\to \R$ with a finite essential supremum endowed with the norm $\|g\|_{L_{x, v}^{\infty}}:=\textup{ess\,sup}_{(x, v) \in \mathbb{T}^d \times \mathbb{R}^d}
|g(x, v)|.$ Let $K$ be an positive integer and $\beta:=(\beta_1,\ldots,\beta_K)^\top$ a $K$-dimensional standard Brownian motion on a complete probability space $(\Omega,\mathscr{F},\mathbb{P})$.
 Suppose that the initial value $f_0$ of \eqref{eq:Vla} is a non-negative and globally Lipschitz continous function with  
$\|f_0\|_{L_{x,v}^1}=1$. In the sequel, we impose the following conditions.

\begin{hypothesis}\label{Asp:1}
Assume the following conditions (A1) and (A2) or (A2)$^\prime$:
\begin{enumerate}
\item[(A1)\phantom{$^\prime$}] $\sigma:=(\sigma_1,\ldots,\sigma_K)^\top$ is globally Lipschitz continuous;
 \item[(A2)\phantom{$^\prime$}] $E:[0,T]\times\mathbb{T}^d\to \R^d$ is a Lipschitz continuous function (Case (I)); 
 \item[(A2)$^\prime$] $E$ is the self-contained electric field given by \eqref{eq:VPE} (Case (II)),
 \end{enumerate}
 such that the solution of \eqref{eq:Vla} exists in the following sense:
 \begin{equation*}
f(t,x,v)=f_0(\widehat{\phi}_{0,t}(x,v)),\quad t>0, (x,v)\in\mathbb{T}^d\times\R^d,\quad \textup{a.s.}
\end{equation*}

Here, $\widehat{\phi}_{0,t}$ denotes the inverse map of $\phi_{0,t}$ with
$\{\phi_{0,t}\}_{t\ge 0}$ being the stochastic flow of homeomorphisms associated with \eqref{eq:SDE} (see, e.g., \cite{KH19}). Namely, $\phi_{0,t}(x,v)$ is the solution of \eqref{eq:SDE} at $t$ starting from time $0$ with the initial value $(x,v)$.
\end{hypothesis}

Under (A2) in Hypothesis \ref{Asp:1},
the model \eqref{eq:Vla} reduces to the stochastic linear Vlasov equation (Case (I)) driven by transport noise (see \cite{BC24}). 
Hypothesis \ref{Asp:1}(A2)$^\prime$ corresponds to the stochastic Vlasov--Poisson equation (Case (II)). We denote by $G$ the Green function for the negative Laplace–Beltrami operator on $\mathbb T^d$
, i.e., $-\Delta G=\boldsymbol{\delta}_0-1$, where $\boldsymbol{\delta}_0$ is the Dirac delta function centered at the origin. Introducing the Coulomb kernel $H=-\nabla_xG$,
it follows from \eqref{eq:VPE} that 
\begin{equation*}
E(t,x)=\int_{\mathbb{T}^d} H(x-y)\rho_f(t,y)\ud y
\end{equation*}
where $\rho_f(t,y):=\int_{\R^d}f(t,y,v)\ud v$ denotes the mass density. 
For a detailed study on the well-posedness of stochastic Vlasov equations, we refer to \cite{BC24,DFV14} and references therein. We also refer to \cite{CLZ20,CLZ24} for 
insights into the connection between stochastic Vlasov equations and stochastic Wasserstein Hamiltonian flows. 

\begin{remark}\label{rem1}
When $\sigma$ is a constant matrix,
the solution of \eqref{eq:Vla}-\eqref{eq:VPE} is equivalent to the solution of the deterministic Vlasov--Poisson equation under a stochastic coordinate transformation  (see also \cite{DFV14}). To illustrate this, we denote by $(E^{0},f^{0})$ the solution of \eqref{eq:Vla}-\eqref{eq:VPE} with $\sigma\equiv0$.
By the It\^o formula, it can be verified that for any constant matrix $\tilde{\sigma}\in\R^{d\times K}$, the pair $(E^{\tilde{\sigma}},f^{\tilde{\sigma}})$ given by
\begin{gather*}
E^{\tilde{\sigma}}(t,x):=E^{0}\Big(t,x-\tilde{\sigma}\int_0^t \beta(s)\ud s\Big),\\
f^{\tilde{\sigma}}(t,x,v):=f^0\Big(t,x-\tilde{\sigma}\int_0^t \beta(s)\ud s,v-\tilde{\sigma} \beta(t)\Big)
\end{gather*}
satisfies the stochastic Vlasov--Poisson equation \eqref{eq:Vla}-\eqref{eq:VPE} with $\sigma\equiv\tilde\sigma$. 
\end{remark}

The momentum and kinetic energy associated with the Vlasov equation are 
\begin{gather*}
\mathcal{P}[f(t)]=\int_{\mathbb{T}^d\times\mathbb{R}^d}vf(t,x,v)\ud x\ud v\qquad\text{and}\qquad
\mathcal{K}[f(t)]=\frac12\int_{\mathbb{T}^d\times\mathbb{R}^d}\|v\|^2f(t,x,v)\ud x\ud v,
\end{gather*}
respectively, where $\|\cdot\|$ stands for the Euclidean norm in $\R^d$. The total energy reads
\begin{align}\label{eq:DefHF}
\mathcal H[f(t)]&:= \mathcal{K}[f(t)]\\
&\quad+\begin{cases}
\int_{\mathbb{T}^d\times\R^d}u(x)f(t,x,v)\ud x\ud v,\quad &\text{if } E=-\nabla u\text{ for some } u:\mathbb{T}^d\to\R,\\\notag
\frac{1}{2}\int_{\mathbb{T}^d}\|E(t,x)\|^2\ud x,\quad &\text{if } E \text{ satisfies } \eqref{eq:VPE}.
\end{cases}
\end{align}
It is known that the mass $\|f(t)\|_{L^1_{x,v}}$, the momentum $\mathcal{P}[f(t)]$,
and the total energy $\mathcal H[f(t)]$
are invariants for the determinisitic Vlasov--Poisson equation
(see, e.g., \cite{GHS22}). In the following, we show how the transport noise affects the evolution of relevant physical quantities.

As illustrated in \cite[Proposition 11]{BC24}, the solution of \eqref{eq:Vla} admits the following preservation properties.
\begin{enumerate}
\item[(1)] Preservation of positivity: $f(t, x, v) \ge 0$ almost surely for all $t\ge0$ and $(x, v) \in \mathbb{T}^d \times \mathbb{R}^d$.
\item[(2)] Preservation of integrals: Let $\Phi: \R\rightarrow\R$ be measurable mapping. Then for any $t\ge0$,
\begin{align}\label{eq:fPhi}
\iint_{\mathbb{T}^d\times\mathbb{R}^d} \Phi\left(f(t, x, v)\right) \ud x \ud v=\iint_{\mathbb{T}^d\times\mathbb{R}^d} \Phi\left(f_0(x, v)\right) \ud x \ud v\quad \textup{a.s}.
\end{align}
In particular, if $f_0 \in L_{x, v}^p$ for some
$p \in[1, \infty]$, then for any $t\ge0$,
$
\left\|f(t)\right\|_{L_{x, v}^p}=\left\|f_0\right\|_{L_{x, v}^p}
$ almost surely.
\end{enumerate}
It can be seen from \eqref{eq:fPhi} that the mass associated with the stochastic Vlasov equation \eqref{eq:Vla} remains invariant for almost all sample paths. 

The equivalent It\^o formulation of \eqref{eq:Vla} takes the form
\begin{align}\label{eq:Ito}
&\ud_tf +(v\cdot \nabla_xf+E(t,x)\cdot\nabla_vf)\ud t
+\sum_{k=1}^K\sigma_k(x)\cdot\nabla_v f\ud \beta_k(t)\\\notag
&=\frac{1}{2}\sum_{i,j=1}^d(\sigma\sigma^\top)_{i,j}(x)\partial_{v_i}\partial_{v_j}f\ud t.
\end{align}
By the It\^o formula and integrating by parts, it holds that
\begin{gather}\label{eq:dtPft}
\ud_t \mathcal{P}[f(t)]=\int_{\mathbb{T}^d} \rho_f E\ud x\ud t+\sum_{k=1}^K\int_{\mathbb{T}^d} \rho_f\sigma_k\ud x\ud\beta_k(t),\\\label{eq:dtKft}
\ud_t \mathcal{K}[f(t)]=\int_{\mathbb{T}^d}j_f\cdot E\ud x\ud t
+\sum_{k=1}^K\int_{\mathbb{T}^d}j_f\cdot\sigma_k\ud x\ud \beta_k(t)+\frac12\int_{\mathbb{T}^d}\textup{Tr}(\sigma\sigma^\top)\rho_f\ud x\ud t,
\end{gather}
where $\rho_f(t,x)=\int_{\R^d}f(t,x,v)\ud v$ is the mass density and $j_f(t,x):=\int_{\R^d}vf(t,x,v)\ud v$ denotes the momentum density. This shows that the transport noise breaks conservation laws of momentum and total energy. In the following, we study the evolution laws of the averaged momentum $\E[\mathcal{P}[f(t)]]$, averaged kinetic energy $\E[\mathcal{K}[f(t)]]$, and averaged total energy $\E[\mathcal{H}[f(t)]]$. Hereafter, $\E[\,\cdot\,]$ denotes the expectation with respect to $\mathbb{P}$.

\begin{proposition}\label{tho:Hft}
Let Hypothesis \ref{Asp:1} hold and let $f$ be the solution of \eqref{eq:Vla}.
\begin{enumerate}
\item[(i)]If $E\equiv E_0$ for some constant vector $E_0\in\R^d$, then the averaged momentum grows linearly with respect to time, i.e., for any $t\ge0$,
\begin{equation}\label{eq:momentum}
\E\left[\mathcal{P}[f(t)]\right]=\E\left[\mathcal{P}[f_0]\right]+E_0t.
\end{equation}
If in addition $\sigma$ is a constant matrix, then for any $t\ge0$,
\begin{equation}\label{eq:Kenergy}
\E\left[\mathcal{K}[f(t)]\right]=\E\left[\mathcal{K}[f_0]\right]+\left(E_0\cdot\E\left[\mathcal{P}[f_0]\right]+\frac{1}{2}\textup{Tr}(\sigma\sigma^\top)\right)t+\frac12\|E_0\|^2t^2.
\end{equation}

 \item[(ii)]
If $E=-\nabla u$ for a twice continuously differentiable function $u:\mathbb{T}^d\to\R$, then for any $t\ge0$,
\begin{equation}\label{eq:Hf}
\E\left[\mathcal H[f(t)]\right]=\E\left[\mathcal H[f_0]\right]+\frac12\E\int_0^t\int_{\mathbb{T}^d}\textup{Tr}((\sigma\sigma^\top)(x))
\rho_f(s,x)\ud x\ud s.
\end{equation}
In particular, if $\sigma$ is a constant matrix, then for any $t\ge0$,
\begin{equation}\label{eq:Hftadd}
\E\left[\mathcal H[f(t)]\right]=\E\left[\mathcal H[f_0]\right]+\frac{t}{2}\textup{Tr}(\sigma\sigma^\top).
\end{equation}
\item[(iii)] If $E$ satisfies \eqref{eq:VPE}, then $
\E[\mathcal{P}[f(t)]]=\E[\mathcal{P}[f_0]]
$ and
\eqref{eq:Hf} hold for any $t\ge0$.
\end{enumerate}
\end{proposition}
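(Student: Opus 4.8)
The plan is to take the two It\^o evolution identities \eqref{eq:dtPft} and \eqref{eq:dtKft} as the starting point, substitute the specific structure of $E$ and $\sigma$ in each case so as to rewrite the drift terms in terms of $\mathcal{P}[f(t)]$, $\mathcal{K}[f(t)]$ and conserved quantities, take expectations so that the stochastic integrals drop out, and finally integrate the resulting ordinary differential equations in $t$. Two facts get used repeatedly: conservation of mass, $\int_{\mathbb{T}^d}\rho_f(t,x)\,\ud x=\|f(t)\|_{L^1_{x,v}}=1$ from \eqref{eq:fPhi}; and the a priori estimates for the stochastic characteristics \eqref{eq:SDE} (together with $f(t)=f_0\circ\widehat\phi_{0,t}$ and volume preservation), which give $\E\sup_{t\le T}\mathcal{K}[f(t)]<\infty$ and $\E\sup_{t\le T}|\mathcal{P}[f(t)]|^2<\infty$, so that, since $\sigma$ is bounded on the compact torus, all stochastic integrals below are mean-zero martingales.

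For part (i) I would note that $E\equiv E_0$ gives $\int_{\mathbb{T}^d}\rho_f E_0\,\ud x=E_0$, so taking expectations in \eqref{eq:dtPft} yields $\tfrac{\ud}{\ud t}\E[\mathcal{P}[f(t)]]=E_0$ and hence \eqref{eq:momentum}. If in addition $\sigma$ is constant, then $\int_{\mathbb{T}^d}j_f\cdot E_0\,\ud x=E_0\cdot\mathcal{P}[f(t)]$ and $\int_{\mathbb{T}^d}\textup{Tr}(\sigma\sigma^\top)\rho_f\,\ud x=\textup{Tr}(\sigma\sigma^\top)$, so expectation in \eqref{eq:dtKft} combined with \eqref{eq:momentum} gives $\tfrac{\ud}{\ud t}\E[\mathcal{K}[f(t)]]=E_0\cdot\E[\mathcal{P}[f_0]]+\|E_0\|^2 t+\tfrac12\textup{Tr}(\sigma\sigma^\top)$, and integrating in $t$ produces \eqref{eq:Kenergy}.

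For parts (ii) and (iii) the extra ingredient I would establish first is the continuity equation for the spatial density: integrating the It\^o form \eqref{eq:Ito} over $v\in\R^d$ annihilates every $v$-divergence (in particular all $\ud\beta_k$ terms and the second-order term), leaving $\ud_t\rho_f=-\nabla_x\cdot j_f\,\ud t$ with no martingale part. In Case (ii), writing the potential part of \eqref{eq:DefHF} as $\int_{\mathbb{T}^d}u\,\rho_f\,\ud x$ and using this equation with one integration by parts yields $\ud_t\int_{\mathbb{T}^d}u\,\rho_f\,\ud x=\int_{\mathbb{T}^d}\nabla u\cdot j_f\,\ud x\,\ud t=-\int_{\mathbb{T}^d}E\cdot j_f\,\ud x\,\ud t$; adding this to \eqref{eq:dtKft}, the drift $\int_{\mathbb{T}^d}j_f\cdot E\,\ud x$ cancels and only the noise-induced term survives, so taking expectations gives \eqref{eq:Hf}, and the constant-$\sigma$ reduction \eqref{eq:Hftadd} follows from $\int_{\mathbb{T}^d}\rho_f\,\ud x=1$. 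In Case (iii), the odd symmetry of the Coulomb kernel $H=-\nabla_x G$ (since $G$ is even) makes $\int_{\mathbb{T}^d}\rho_f E\,\ud x=\iint_{\mathbb{T}^d\times\mathbb{T}^d}H(x-y)\rho_f(t,x)\rho_f(t,y)\,\ud x\,\ud y=0$, so \eqref{eq:dtPft} is a pure martingale and $\E[\mathcal{P}[f(t)]]=\E[\mathcal{P}[f_0]]$; for the field energy one writes $\tfrac12\int_{\mathbb{T}^d}\|E\|^2\,\ud x=\tfrac12\iint_{\mathbb{T}^d\times\mathbb{T}^d}G(x-y)\rho_f(t,x)\rho_f(t,y)\,\ud x\,\ud y$ up to a time-independent constant (using $-\Delta G=\boldsymbol{\delta}_0-1$ and integration by parts), differentiates this using the continuity equation and the symmetry of $G$, obtains once more $\ud_t\big(\tfrac12\int_{\mathbb{T}^d}\|E\|^2\,\ud x\big)=-\int_{\mathbb{T}^d}E\cdot j_f\,\ud x\,\ud t$, and then concludes exactly as in Case (ii).

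The hard part will not be this algebra but the rigorous justification of the steps that interchange integration in $v$, differentiation, and the stochastic dynamics: integrating \eqref{eq:Ito} over $\R^d$, differentiating the quadratic potential-energy functional along the random flow $\widehat\phi_{0,t}$, and verifying the true-martingale property all need sufficient decay of $f(t,\cdot,\cdot)$ in $v$, uniform in $\omega$ and $t\le T$. This comes from $p$-th moment bounds for $V_t$ in \eqref{eq:SDE}; in Case (II) one must additionally bound $E$ in $L^\infty$ in terms of $\rho_f$ via $H\in L^1(\mathbb{T}^d)$ and the integral-preservation property $\|f(t)\|_{L^\infty_{x,v}}=\|f_0\|_{L^\infty_{x,v}}$, which is the genuinely delicate input and is where I would lean on the well-posedness theory of \cite{BC24,DFV14}.
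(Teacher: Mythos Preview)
Your argument is correct and follows the same overall scheme as the paper: start from \eqref{eq:dtPft}--\eqref{eq:dtKft}, derive the continuity equation $\partial_t\rho_f+\nabla_x\cdot j_f=0$ by integrating \eqref{eq:Ito} in $v$, and take expectations. Parts (i) and (ii) match the paper's treatment essentially verbatim.

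The genuine difference is in part (iii). You invoke the Coulomb-kernel representation $E=H*\rho_f$ and use the oddness of $H$ (respectively the evenness of $G$) to obtain $\int_{\mathbb{T}^d}\rho_f E\,\ud x=0$ and to compute $\ud_t\big(\tfrac12\int_{\mathbb{T}^d}\|E\|^2\,\ud x\big)$ via the bilinear form $\iint G(x-y)\rho_f(x)\rho_f(y)\,\ud x\,\ud y$. The paper instead works directly from the Poisson equation: it writes $\int_{\mathbb{T}^d}\rho_f E\,\ud x=\int_{\mathbb{T}^d}(\textup{div}E+1)E\,\ud x=\int_{\mathbb{T}^d}\textup{div}(\nabla u)\nabla u\,\ud x-\int_{\mathbb{T}^d}\nabla u\,\ud x=0$, and for the field energy it differentiates $\int_{\mathbb{T}^d}\|E\|^2\,\ud x$ in $t$, uses $\partial_t(\textup{div}E)=\partial_t\rho_f=-\textup{div}j_f$, and integrates by parts to reach $-2\int_{\mathbb{T}^d}E\cdot j_f\,\ud x$. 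Your symmetry argument makes the cancellations conceptually transparent but requires the Green's-function identity $\tfrac12\int\|E\|^2\,\ud x=\tfrac12\iint G(x-y)\rho_f(x)\rho_f(y)\,\ud x\,\ud y+\text{const}$ as an extra input; the paper's route is slightly more self-contained since it uses nothing about $G$ or $H$ beyond \eqref{eq:VPE} itself. Your explicit discussion of the martingale property and decay in $v$ is more careful than the paper, which leaves those formal steps implicit.
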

\begin{proof}

(1) Integrating \eqref{eq:Ito} with respect to $v$ yields the continuity equation
$
\partial_t\rho_f+\textup{div}j_f=0$. Hence, for any $s\ge0$,
\begin{equation}\label{eq:rhof}
\int_{\mathbb{T}^d} \rho_f(s,x)\ud x=\int_{\mathbb{T}^d} \rho_f(0,x)\ud x=1,
\end{equation}
which, together with \eqref{eq:dtPft} and \eqref{eq:dtKft}, implies the property (i).

(2) In virtue of \eqref{eq:Ito}, \eqref{eq:DefHF}, \eqref{eq:dtKft}, and $E=-\nabla u$, integrating by parts shows that
\begin{equation*}
\ud_t \mathcal H[f(t)]=\sum_{k=1}^K\int_{\mathbb{T}^d}j_f\cdot\sigma_k\ud x\ud \beta_k(t)+\frac12\int_{\mathbb{T}^d}\textup{Tr}(\sigma\sigma^\top)\rho_f\ud x\ud t,\quad t\ge0.
\end{equation*}
Furthermore, if $\sigma$ is a constant matrix, then \eqref{eq:Hftadd} is a consequence of \eqref{eq:rhof}.

(3)
Taking \eqref{eq:VPE} into account and integrating by parts, it holds that
$$\int_{\mathbb{T}^d} \rho_f E\ud x=\int_{\mathbb{T}^d}\left(\textup{div} E+1\right)E\ud x=\int_{\mathbb{T}^d}\textup{div}(\nabla u) \nabla u\ud x-\int_{\mathbb{T}^d} \nabla u\ud x=0.$$
This, in combination with \eqref{eq:dtPft}, ensures that $\E[\mathcal{P}[f(t)]]=\E[\mathcal{P}[f_0]]$ for any $t\ge0$.
By \eqref{eq:VPE}, $
\partial_t\rho_f+\textup{div}j_f=0$, and integrating by parts, we obtain
\begin{align*}
\frac{\ud}{\ud t}\int_{\mathbb{T}^d}|E|^2\ud x&=2\int_{\mathbb{T}^d}E\cdot\partial_t E\ud x=-2\int_{\mathbb{T}^d}\nabla u\cdot\partial_t E\ud x\\
&=2\int_{\mathbb{T}^d} u\partial_t (\textup{div}E)\ud x=2\int_{\mathbb{T}^d} u\partial_t \rho_f\ud x=-2\int_{\mathbb{T}^d} u\textup{div}j_f\ud x\\
&=2\int_{\mathbb{T}^d} \nabla u\cdot j_f\ud x=-2\int_{\mathbb{T}^d} E\cdot j_f\ud x,
\end{align*}
which, along with \eqref{eq:dtKft}, completes the proof of \eqref{eq:Hf}. 
\end{proof}

\section{Dynamic domain semi-Lagrangian method}\label{S3}
In this section, we first introduce the volume-preserving integrator for the stochastic characteristics \eqref{eq:SDE} in subsection \ref{S3.1}, followed by a reconstruction step in subsection \ref{S3.2}. By combining these techniques with a dynamic domain strategy in subsection \ref{S3.3}, we propose a novel semi-Lagrangian method, as shown in Algorithm \ref{Algo:2}.

\subsection{Volume-preserving integrator}\label{S3.1}
We begin with numerically solving the stochastic characteristics \eqref{eq:SDE} of \eqref{eq:Vla}. 
Given a terminal time $T>0$,
let $\tau=T/N$ ($N\in\mathbb{N}_+$) and $t_n:=n\tau$ for $n=0,1,\ldots,N$. Denote by $\{\Psi_{t_{n-1},t_n}\}_{n=1}^N$ the numerical flow generated by a volume-preserving integrator of \eqref{eq:SDE} (see, e.g., \cite[section VI.9]{HLW02}) such that $\{\Psi_{t_{n-1},t_n}\}_{n=1}^N$ is invertible and volume-preserving. Thus, for any $(x,v)\in\mathbb{T}^d\times\mathbb R^d$, we can find the starting point $(X,V)$ such that $\Psi_{t_{n-1},t_n}(X,V)=(x,v)$ and
	\begin{equation}\label{eq:VP}
	\det\left(\nabla_{x,v}\Psi_{t_{n-1},t_n}(x,v)\right)=1,\quad \forall~(x,v)\in\mathbb{T}^d\times\R^d. 
	\end{equation}
	Then the inverse map $\widehat{\Psi}_{t_{n-1},t_n}$ of $\Psi_{t_{n-1},t_n}$ can be regarded as an approximation of the inverse map $\widehat{\phi}_{t_{n-1},t_n}$ of $\phi_{t_{n-1},t_n}$. 

Next, we introduce a class of numerical integrators for \eqref{eq:SDE} based on the splitting technique, which possess 
 the volume-preserving property \eqref{eq:VP}. Recall that the infinitesimal generator of \eqref{eq:SDE} is 
$$\mathcal L:= E(t,x)\cdot \nabla_v+v\cdot\nabla_x+\frac12\sigma\sigma^\top(x):\nabla_v^2,$$
where $\sigma\sigma^\top(x):\nabla_v^2=\sum_{i,j=1}^d\big((\sigma\sigma)^\top\big)_{i,j}(x)\partial_{v_i}\partial_{v_j}$. 
To propose splitting integrators for \eqref{eq:SDE}, we denote by
\begin{equation*}
\mathcal L_{v}:= v\cdot\nabla_x,\qquad
\mathcal L_{x}(t):=\mathcal L_{x,D}(t)+\mathcal L_{x,S},\quad t\in[0,T]
\end{equation*}
with $\mathcal L_{x,D}(t):= E(t,x)\cdot \nabla_v$ and $\mathcal L_{x,S}:=\frac12(\sigma\sigma^\top)(x):\nabla_v^2$.
For an operator $\tilde{\mathcal{L}}\in\{\mathcal{L}_{x}(t),\mathcal{L}_{v},\mathcal{L}_{x,D}(t),\mathcal{L}_{x,S}\}$ with a fixed $t\in[0,T]$, denote by 
 $\{\Phi^{\tilde{\mathcal{L}}}_{0,s}\}_{s\ge0}$ the solution flow 
 with infinitesimal generator $\tilde{\mathcal{L}}$. 
 Let $\delta\beta_{n,k}:=\beta_k(t_{n})-\beta_k(t_{n-1})$ with $n=1,\ldots,N,$ be the increments of the Brownian motions $\beta_k$, $k=1,2,\ldots,K$. We present three concrete numerical integrators for \eqref{eq:SDE} as follows.

(1) 
 The one-step numerical integrator 
$\Psi_{t_{n-1},t_n}:=
\Phi^{\mathcal L_{v}}_{0,\tau}\circ
\Phi^{\mathcal L_{x}(t_{n-1})}_{0,\tau}$
corresponds to a symplectic Euler method for \eqref{eq:SDE}. It can be verified that
 the inverse map $\widehat{\Psi}_{t_{n-1},t_n}$ associated with this symplectic Euler method satisfies
\begin{equation}
\label{eq:inv-Phi}\tag{SEM}
		\left\{
		\begin{split}
			\widehat{\Psi}^1_{t_{n-1},t_n}(x,v)&=x-\tau v, \\
			\widehat{\Psi}^2_{t_{n-1},t_n}(x,v)&=v-\tau E(t_{n-1},\widehat{\Psi}^1_{t_{n-1},t_n}(x,v)) - \sigma(\widehat{\Psi}^1_{t_{n-1},t_n}(x,v)) \delta\beta_{n}.
		\end{split}
		\right.\\
\end{equation}
with $\delta\beta_{n}:=(\delta\beta_{n,1},\ldots,\delta\beta_{n,K})^\top$.
 Hereafter, $\widehat{\Psi}^1_{t_{n-1},t_n}(x,v)$ 
and $\widehat{\Psi}^2_{t_{n-1},t_n}(x,v)$ denote the components of $\widehat{\Psi}_{t_{n-1},t_n}(x,v)$ in the position and velocity coordinates, respectively.

(2) Based on the Lie--Trotter splitting technique,
we can construct another one-step numerical integrator 
$\Psi_{t_{n-1},t_n}:=\Phi^{\mathcal{L}_{x,D}(t_{n-1})}_{0,\tau}\circ
\Phi^{\mathcal L_{v}}_{0,\tau}\circ
\Phi^{\mathcal{L}_{x,S}}_{0,\tau}$ for \eqref{eq:SDE}.
 The inverse map $\widehat{\Psi}_{t_{n-1},t_n}$ associated with this Lie--Trotter splitting method is given by
\begin{equation}
\label{eq:inv-LTS}\tag{LTSM}
		\left\{
		\begin{split}
			\widehat{\Psi}^1_{t_{n-1},t_n}(x,v)&=x-\tau v+\tau^2E(t_{n-1},x),\\
			\widehat{\Psi}^2_{t_{n-1},t_n}(x,v)&=v-\tau E(t_{n-1},x)-\sigma\big(\widehat{\Psi}^1_{t_{n-1},t_n}(x,v)\big)\delta\beta_{n}.
		\end{split}
		\right.
\end{equation}

(3) Applying the Strang splitting technique, we obtain a one-step numerical integrator 
$\Psi_{t_{n-1},t_n}:=\Phi^{\mathcal L_{v}}_{0,\tau/2}\circ\Phi^{\mathcal{L}_{x}(t_{n-1})}_{0,\tau}\circ
\Phi^{\mathcal L_{v}}_{0,\tau/2}$ for \eqref{eq:SDE}.
The inverse map $\widehat{\Psi}_{t_{n-1},t_n}$ associated with this Strang splitting method fulfills
\begin{equation}
\label{eq:inv-SS}\tag{SSM}
		\left\{
		\begin{split}
			\widehat{\Psi}^1_{t_{n-1},t_n}(x,v)&=x-\frac{1}{2}\tau (v+\widehat{\Psi}^2_{t_{n-1},t_n}(x,v)),\\
			\widehat{\Psi}^2_{t_{n-1},t_n}(x,v)&=v-\tau E\big(t_{n-1},x-\frac{1}{2}\tau v\big)-\sigma\big(x-\frac{1}{2}\tau v\big)\delta\beta_{n}.
		\end{split}
		\right.
\end{equation}
	Since the solution flows with the infinitisimal generators $\mathcal{L}_{x}(t_{n-1})$, $\mathcal{L}_{v}$, $\mathcal{L}_{x,D}(t_{n-1})$, and $\mathcal{L}_{x,S}$ are volume-preserving, the numerical integrators \eqref{eq:inv-Phi}, \eqref{eq:inv-LTS}, and \eqref{eq:inv-SS} for the inverse flow of \eqref{eq:SDE} fulfill the volume-preserving property \eqref{eq:VP}.

For every $k,l\in\{0,1,\ldots,N\}$ with $k<l$, we denote by $\Psi_{t_k,t_l}:=\Psi_{t_{l-1},t_{l}}\circ\cdots\circ\Psi_{t_{k+1},t_{k+2}}\circ\Psi_{t_k,t_{k+1}}$ and $\widehat{\Psi}_{t_k,t_l}:=\widehat{\Psi}_{t_k,t_{k+1}}\circ\widehat{\Psi}_{t_{k+1},t_{k+2}}\circ\cdots\circ\widehat{\Psi}_{t_{l-1},t_{l}}$ the numerical flow and its inverse map associated with \eqref{eq:SDE}, respectively.
Based on the numerical integrator for \eqref{eq:SDE},
we have the following temporal semi-discretization for \eqref{eq:Vla}:
\begin{equation}\label{eq:semi-dis}
	f^\tau_n(x,v)=f_{n-1}^\tau(\widehat{\Psi}_{t_{n-1},t_n}(x,v)),\quad (x,v)\in\mathbb{T}^d\times\R^d
\end{equation}
for $n=1,\ldots,N$
with $f^\tau_0=f_0$. Note that $f^\tau_n(x,v)=f_0(\widehat{\Psi}_{0,t_n}(x,v))$ for any $n=0,1,\ldots,N$. The numerical method \eqref{eq:semi-dis} with \eqref{eq:inv-LTS} (called the Lie--Trotter splitting scheme) has been studied in \cite{BC24} for \eqref{eq:Vla} with a given function $E:\mathbb{T}^d\to \R$. 
Repeating the same reasoning as in \cite[Proposition 12]{BC24}, one has that \eqref{eq:semi-dis} with \eqref{eq:inv-Phi}, \eqref{eq:inv-LTS}, or \eqref{eq:inv-SS}
 preserves positivity and integrals thanks to the volume-preserving property \eqref{eq:VP}.

The temporal semi-discretization \eqref{eq:semi-dis} can be used for Case (I) at each specified point $(x,v)\in\mathbb{T}^d\times\R^d$. However, it is not implementable for Case (II) due to the absence of the analytical formulation of $E$. 
To illustrate this, we take the case $d=1$ as an example for simplicity. By imposing a zero-mean electrostatic condition
 $\int_0^L E(t,x)\ud x=0, t\ge0$,
 the electric field satisfies (see, e.g., \cite{BN04}),
\begin{equation*}
	E(t,x)=\int_{0}^L \tilde{K}(x,y)\left(\rho_f(t,y)-1\right)\ud y \text{\quad with
		\quad}\tilde{K}(x,y)=\begin{cases}\frac{y}{L}-1,& 0\le x<y,\\
		\frac{y}{L},& y<x\le L.
	\end{cases}
\end{equation*}
The positivity of $f(t,x,v)$ implies that for any $(t,x)\in[0,T]\times\mathbb{T}$,
$$|E(t,x)|\le \int_{0}^L\int_{\R}|\tilde{K}(x,y)|f(t,y,v)\ud v\ud y+\int_{0}^L|\tilde{K}(x,y)|\ud y\le 2L.$$
Therefore, the term $E(t_{n-1},\cdot)$, which appears in the computation of $\widehat{\Psi}_{t_{n-1},t_n},$ can be  approximated 
by
\begin{equation}\label{eq:Electric}
\widetilde{E}_{n-1}(\cdot)\approx\max\left\{\min\left\{\int_{0}^L \tilde{K}(\cdot,y)\left(\int_{\R}\widetilde{f}_{n-1}(y,v)\ud v-1\right)\ud y,2L\right\},-2L\right\},
\end{equation}
 provided that $\widetilde{f}_{n-1}$ is a numerical solution of $f(t_{n-1})$. 
 The notation $``\approx"$ in \eqref{eq:Electric} indicates that
 the integrals on the right hand could be computed via the numerical integration based on a mesh for the phase space $\mathbb{T}\times\R$. 
Therefore,  a full discrezation for \eqref{eq:Vla}-\eqref{eq:VPE} is necessary  to sequentially solve $\{\widetilde E_{n}\}_{n\le N}$ and $\{\widetilde{f}_{n}\}_{n\le N}$.

\subsection{Lagrange first-order interpolation}\label{S3.2}
Since the initial density $f_0$ satisfies the normalized condition $\|f_0\|_{L^1_{x,v}}=1$, one can expect that $f_0$
decays to $0$ at infinity with respect to the velocity variable. 
Thus, we make the following hypothesis, which is equivalent to that
$\sup_{x\in\mathbb{T}^d}|f_0(x,v)|\to 0$ as $\|v\|\to\infty$.
\begin{hypothesis}\label{Hyp:initial}
For any $0<\epsilon_0\ll 1$, there exists a positive constant $U_0:=U_0(\epsilon_0)$ such that 
$|f_0(x,v)|<\epsilon_0$ for any $x\in\mathbb T^d$ and 
$v\notin[-U_0,U_0]^d.$
\end{hypothesis}

Let $\delta y=(\delta y_1,\ldots,\delta y_{2d}):=(\delta x_1,\ldots,\delta x_d,\delta v_1,\ldots,\delta v_d)$
 with
$\delta x_i$ and $\delta v_i$ being the stepsizes for the $i$th components of position and velocity coordinates, respectively. 
Under Hypothesis \ref{Hyp:initial}, for any fixed $t\in[0,T]$, it is possible to select a bounded set $\mathscr{D}_b:=\mathbb{T}^d\times[-b,b]^d$ with some $b\in(0,\infty)$  such that the state of particles is mainly concentrated on $\mathscr{D}_b$. Throughout this paper, we assume that $L/\delta x_i\in\mathbb{Z}$, $U_0/\delta v_i\in\mathbb{Z}$, and $b/\delta v_i\in\mathbb{Z}$ for each $i=1,\ldots,d$.
To encompass the phase space $\mathscr{X}:=\mathbb{T}^d\times\R^d$, we adopt the convention $\mathscr{D}_\infty:=\mathscr{X}$.
In this subsection, we introduce the Lagrange first-order interpolation on the domain $\mathscr{D}_b$ with $b\in(0,\infty]$, which can be used as a reconstruction procedure of the semi-Lagrangian method for \eqref{eq:Vla} later.
 For general Lagrange interpolation and spline interpolation, we refer to, e.g., \cite[section 5]{BM08}.

The set of grid points of a rectangular partition for $\mathscr{D}_b$  is given by
\begin{align}\label{eq:MDb}
\mathcal M_{\mathscr D_b}&:=\{(j_1\delta x_1,\ldots, j_d\delta x_d,k_1\delta v_1,\ldots,k_d\delta v_d)\mid \\\notag
&\qquad\qquad\qquad j_i,k_i\in\mathbb{Z} \text{ with } 0\le j_{i}\le L/\delta x_i,|k_i|\le b/\delta v_i,i=1,\ldots,d\}.
\end{align} 
For $\boldsymbol\xi=(\xi_1,\ldots,\xi_{2d})\in\mathbb{Z}^{2d}$, we denote $\square_{\boldsymbol\xi}:=\prod_{i=1}^{2d}[\xi_i\delta y_i,(\xi_i+1)\delta y_i]$. For a function $g$ defined on vertices of the rectangular $\square_{\boldsymbol{\xi}}$,
we define
\begin{equation}\label{eq:Rycell}
\mathscr{R}_{\delta y}^{\boldsymbol\xi}g(y):=\sum_{l_1=\xi_1}^{\xi_1+1}\cdots\sum_{l_{2d}=\xi_{2d}}^{\xi_{2d}+1}g(l_1\delta y_1,\ldots,l_{2d}\delta y_{2d})\prod_{i=1}^{2d}\ell^{(\xi_i)}_{l_i,\delta y_i}(y_i)
\end{equation}
for $y=(y_1,\ldots,y_{2d})\in \square_{\boldsymbol\xi}$, where 
 the associated Lagrange basis satisfies 
$$\ell^{(\xi_i)}_{\xi_i,\delta y_i}(z)=\frac{(\xi_i+1)\delta y_i-z}{\delta y_i},\qquad \ell^{(\xi_i)}_{\xi_i+1,\delta y_i}(z)=\frac{z-\xi_i\delta y_i}{\delta y_i},\quad z\in[\xi_i\delta y_i,(\xi_i+1)\delta y_i].$$
 Given a function $g$ defined on $\mathcal M_{\mathscr D_b}$, we define the Lagrange first-order interpolation operator $\mathscr{R}_{\mathscr{D}_b}^{\delta y}$ applied to $g$ via
\begin{equation}\label{eq:blinearinterp}
\mathscr{R}_{\mathscr{D}_b}^{\delta y} g(y)=\sum_{\mathbf{j},\mathbf{k}}\mathscr{R}_{\delta y}^{(\mathbf{j},\mathbf{k})}g(y) \chi_{\mathbf{j},\mathbf{k}}(y),\quad y\in\mathscr{D}_b,
\end{equation}
where $\mathscr{R}_{\delta y}^{(\mathbf{j},\mathbf{k})}$ is defined in \eqref{eq:Rycell} with $\boldsymbol\xi=(\mathbf{j},\mathbf{k})\in\mathbb{Z}^{2d}$. 
The sum in \eqref{eq:blinearinterp} is taken over the set 
\begin{align*}
\Big\{\mathbf{j}=(j_1,\ldots,j_d),\mathbf{k}=(k_1,\ldots,k_d)&\mid j_i=0,1,\ldots,\frac{L}{\delta x_i}-1,\\
&k_{i}=-\frac{b}{\delta v_i},-\frac{b}{\delta v_i}+1,\ldots,\frac{b}{\delta v_i}-1~\text{for~}i=1,\ldots,d \Big\},
\end{align*}
and $\chi_{\mathbf{j},\mathbf{k}}$ is the indicator function on the rectangular $
\square_{(\mathbf{j},\mathbf{k})}$. It is known that the interpolation error of the Lagrange first-order interpolation satisfies that
 (see, e.g., \cite[Theorem 16.1]{CP91})
\begin{equation}\label{eq:Inerror}\|g-\mathscr{R}_{\mathscr{D}_b}^{\delta y} g\|_{L^\infty(\mathscr{D}_b)}\le Ch^2\|D^2g\|_{L^\infty(\mathscr{D}_b)}
\end{equation}
where $h:=|\delta y|_{\ell^\infty}$ with $|\cdot|_{\ell^\infty}$ being the $\infty$-norm for vectors.

\subsection{Dynamic domain semi-Lagrangian method}\label{S3.3}

In the sequel,
we assume that the numerical integrator for \eqref{eq:SDE} satisfies that
\begin{equation}\label{eq:Xin}
|\widehat{\Psi}^2_{t_{n-1},t_n}(x,v)-v|_{\ell^\infty}\le \tau \|E\|_{L^\infty([0,T]\times\mathbb{T}^d;\ell^\infty)}+\sum_{k=1}^{K}\|\sigma_k\|_{L^\infty(\mathbb{T}^d;\ell^\infty)}|\delta\beta_{n,k}|=:\widetilde\Xi_n.
\end{equation}
This condition is fulfilled by \eqref{eq:inv-Phi}, \eqref{eq:inv-LTS}, and \eqref{eq:inv-SS}. The assumption \eqref{eq:Xin} ensures that in the $n$th time step, the movement of the numerical solution of  \eqref{eq:SDE} in each velocity direction does not exceed $\widetilde\Xi_n$. 
Thanks to \eqref{eq:semi-dis} and \eqref{eq:Xin}, if $\textup{supp}(f_{n-1}^\tau)\subset\mathbb{T}^d\times[-C_1,C_1]^d$ for some $C_1>0$, then
$\textup{supp}(f^\tau_{n})\subset\mathbb{T}^d\times[-C_1-\widetilde\Xi_n,C_1+\widetilde\Xi_n]^d$.
By iterations, if $\textup{supp}(f_0^\tau)\subset\mathbb{T}^d\times[-U_0,U_0]^d$, then $\textup{supp}(f^\tau_{N})\subset\mathbb{T}^d\times\left[-U_0-\Xi,U_0+\Xi\right]^d$
with $\Xi:= \sum_{n=1}^{N}\widetilde\Xi_n$.

 \begin{remark}\label{rem:1}
 Notice that in the deterministic case (i.e., $\sigma\equiv 0$), $\Xi$ could be bounded by a constant depending on $U_0$, $T$ and $E$.
However, in the stochastic case, the particle's trajectories are not uniformly bounded across all samples since $\{\delta\beta_{n,k}\}_{k=1}^{K}$ is not uniformly bounded. To see this, one can use 
the truncation of the increment of the Brownian motion (i.e., $\widehat{\delta \beta_{n,k}}:=\max\{\min\{\delta \beta_{n,k},A_\tau\},-A_\tau\}$)
 instead of $\delta \beta_{n,k}$ in the stochastic characteristics solver, where $A_\tau=\mathcal{O}(\sqrt{\tau|\log(\tau)|})$ (see, e.g., \cite{MT21} for more details). By this substitution, one can apply the traditional semi-Lagrangian method to solve \eqref{eq:Vla} on the domain $\mathbb{T}^d\times[-U_0-nA_\tau,U_0+nA_\tau]^d$ at $n$th time step. However, its computational cost is very expensive, especially when the time stepsize $\tau$ is small and the terminal time $T$ is large (see Tab.\ \ref{Tabletime} in section \ref{S:NE} for a comparison).
\end{remark}

To reduce the computational cost, inspired by \cite{LR23} we propose a dynamic domain adaptation strategy to update the computational domain in phase space adaptively at each time step. We  incorporate 
the region in phase space that includes the positions and velocities for most particles into the computational domain $\mathbb{T}^d\times [-U_n,U_n]^d$ in phase space at $n$th step. Here, $U_0$ is chosen according to Hypotheis \ref{Hyp:initial}.

\begin{algorithm}[htb]
\caption{Dynamic domain semi-Lagranigan method for \eqref{eq:Vla}.}\label{Algo:2}
\begin{itemize}[leftmargin=*]
\item[]
Set up the threshold $\epsilon_0>0$, time stepsize $\tau$,  discretization parameter  $\delta y=(\delta x_1,\ldots,\delta x_d,\delta v_1,\ldots,\delta v_d)$ in phase space, and the
initial density $f^{\delta y,\tau,\epsilon_0}_{0}:=f_0$ in the initial computational domain $\mathbb{T}^d\times[-U_0,U_0]^d$.

\item[] 
\textbf{for} $n=1, \cdots,N$, \textbf{do}

\begin{enumerate}
\item[(1)]
For Case (II), numerically solve $E(t_{n-1},\cdot)$ by \eqref{eq:VPE} and use $f(t_{n-1})$ as an approximation of $\widetilde{f}_{n-1}$ (Skip this step for Case (I));

\item[(2)]
Compute $U_n$ defined by \eqref{eq:Un} and set the phase space mesh
$\mathcal M_{\mathscr{X}_n}$.

\item[(3)]
For each grid point $(x^*,v^*)\in\mathcal M_{\mathscr{X}_n}$, solve the inverse numerical flow of \eqref{eq:SDE} $$(X,V)=\widehat{\Psi}_{t_{n-1},t_{n}}(x^*,v^*)$$ 
via a volume-preserving integrator (\eqref{eq:inv-Phi}, \eqref{eq:inv-LTS} or \eqref{eq:inv-SS}).  Then compute
\begin{equation*}
 \tilde{f}^{\delta y,\tau,\epsilon_0}_{n}(x^*,v^*):= 
 \begin{cases}
 f^{\delta y,\tau,\epsilon_0}_{n-1}(X,V),\quad
 &\text{if } \quad |V|_{\ell^\infty}\le U_{n-1},\\
 0,\quad &\text{otherwise};
 \end{cases}
\end{equation*}

\item[(4)]
Perform an interpolation (either the Lagrangian interpolation or spline interpolation)
 $f^{\delta y,\tau,\epsilon_0}_{n}=
\mathscr{R}_{\mathscr{X}_n}^{\delta y}\tilde{f}^{\delta y,\tau,\epsilon_0}_{n}$ on $\mathscr{X}_n$.
\end{enumerate}

\textbf{end for}

\item[] 
$\{f_n^{\delta y,\tau,\epsilon_0}\}_{n=0,1,\ldots,N}$ is the computed solution for \eqref{eq:Vla} at time grids.
\end{itemize}

\end{algorithm}
In the rest of this paper, we assume without loss of generality that $L/\delta x_i\in\mathbb{N}$ and $U_0/\delta v_i\in\mathbb{N}$ for all $i=1,\ldots,d$. For the sake of notation simplicity, we denote by $\mathscr{X}_n:=\mathbb{T}^d\times[-U_n,U_n]^d$ and $\mathcal{M}_{\mathscr{X}_n}$ the set of grid points of a rectangular
 partition for $\mathscr{X}_n$ (see \eqref{eq:MDb}).
Recursively, we define 
\begin{equation}\label{eq:Un}
 U_{n}:=\begin{cases}
 U_{n-1}, \quad & \text{if } |f^{\delta y,\tau,\epsilon_0}_{n-1}(x,v)|\le \epsilon_0 
 \quad \forall~(x,v)\in \hat{\Omega} \cap \mathcal{M}_{\mathscr X_{n-1}}, \\
 U_{n-1}+\Xi_{n}, \quad &\text{otherwise},
 \end{cases}
\end{equation}
where 
$\hat{\Omega} := \mathbb{T}^d\times([-U_{n-1},-\hat{U}_{n-1}]\cup[\hat{U}_{n-1},U_{n-1}])^d$ and $\hat{U}_{n-1}:=U_{n-1}-\Xi_{n}$. Here, 
$\Xi_{n}:\Omega\to\R$ could be chosen as any random variable 
satisfying $\Xi_{n}\ge\widetilde\Xi_n$ and $\Xi_{n}/\delta v_i\in\mathbb{N}$ for all $i=1,\ldots,d$, where
$\widetilde\Xi_n$ is defined in \eqref{eq:Xin}.
Finally, we summarize the above ingredients to propose the dynamic domain semi-Lagrangian method in 
Algorithm \ref{Algo:2}.

\section{Convergence analysis}\label{S4}

In this section, we present the convergence analysis of the dynamic domain semi-Lagrangian method (i.e., Algorithm \ref{Algo:2}) for the stochastic linear Vlasov equation (i.e., Case (I)). We use $C$ to denote a generic positive constant that may change from one place to another and depend on several parameters but never on $\tau$, $\delta y$, and $\epsilon_0$.

For $0\le r<t\le T$, let $\mathscr{F}_{r,t}$ be the $\sigma$-algebra generated by $\{\beta(s_1)-\beta(s_2),r\le s_1\le s_2\le t\}$.
For any non-random $(x,v)\in\mathbb{T}^d\times \R^{d}$,
$\widehat{\phi}_{t,T}(x,v)\in\mathscr{F}_{t,T}$ for any $0\le t\le T$.
If $E$ and $\sigma$ are $i$th continuously differentiable with bounded derivatives ($i\in\mathbb{N}_+$), then for any $p\ge2$, there exists $C:=C(p,T)$ such that for any $0\le t\le T$ and $(x,v)\in\mathbb{T}^d\times \R^{d}$,
\begin{align}\label{eq:phideri}
&\E\left[\|D^i\phi_{0,t}(x,v)\|^p\right]+\E\left[\|D^i\widehat{\phi}_{0,t}(x,v)\|^p\right]\le C.
\end{align}
For $i=1,2$, $D\phi_{0,t}(x,v)$ and $D^2\phi_{0,t}(x,v)$ denote the Jacobian matrix and Hessian tensor of $\phi_{0,t}(x,v)$, respectively.
We refer to \cite[Theorem 3.4.4]{KH19} for the proof of \eqref{eq:phideri}.
For any non-random $(x,v)\in\mathbb{T}^d\times \R^{d}$, it can be proved by induction that
$\widehat{\Psi}_{t_n,T}(x,v)\in\mathscr{F}_{t_n,T}$ for any $n=0,1,\ldots,N$.

\begin{hypothesis}\label{Asp:2}
Assume that the inverse numerical flow $\{\Psi_{0,t_n}\}_{n=0}^N$ for \eqref{eq:SDE} satisfies the following conditions. 
\begin{enumerate}
\item[(1)] (Stability) For $j=1,2$ and any $p\ge2$,
\begin{align*}
&\E\left[\|D^j\Psi_{0,t_n}(x,v)\|^p\right]+\E\left[\|D^j\widehat{\Psi}_{0,t_n}(x,v)\|^p\right]\le C(p),\quad (x,v)\in\mathbb{T}^d\times \R^{d}.
\end{align*}
\item[(2)] (Convergence)
There exists  constants $Q>0$, $\gamma\ge 1$, and $p_0\ge 1$ such that for any $(x,v)\in\mathbb{T}^d\times \R^{d}$,
\begin{equation*}
\left(\E\left[\|\widehat{\Psi}_{0,t_n}(x,v)-\widehat{\phi}_{0,t_n}(x,v)\|^{2p_0}\right]\right)^{\frac{1}{2p_0}}\le C(p_0)\tau^{Q}(1+\|x\|^{2\gamma}+\|v\|^{2\gamma})^{\frac12}.
\end{equation*}
\end{enumerate}
\end{hypothesis}

By the Lipschitz continuity of $f_0$, 
Hypothesis \ref{Asp:2} implies that the discretization error of \eqref{eq:semi-dis} has strong convergence order $Q$ in the sense that for any $(x,v)\in\mathbb{T}^d\times \R^{d}$,
\begin{equation}\label{eq:SC-semi}
\sup_{1\le n\le N}\left(\E\left[|f_n^\tau(x,v)-f(t_n,x,v)|^{2p_0}\right]\right)^{\frac{1}{2p_0}}\le C\tau^{Q}(1+\|x\|^{2\gamma}+\|v\|^{2\gamma})^{\frac12}.
\end{equation}
The following lemma implies that 
\eqref{eq:semi-dis} with 
\eqref{eq:inv-Phi}, 
 \eqref{eq:inv-LTS} or \eqref{eq:inv-SS},
has first-order convergence of accuracy.
\begin{lemma}\label{lem:local} 
Assume that the functions $E$ and $\sigma$ are twice continuously differentiable with bounded derivatives.
Then \eqref{eq:inv-Phi}, \eqref{eq:inv-LTS}, and \eqref{eq:inv-SS} satisfy Hypothesis \ref{Asp:2} with $Q=\gamma=1$ and all $p_0\ge1$.
\end{lemma}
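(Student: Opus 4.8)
The plan is to verify the two conditions of Hypothesis \ref{Asp:2} for each of the three inverse numerical flows \eqref{eq:inv-Phi}, \eqref{eq:inv-LTS}, \eqref{eq:inv-SS}, with $Q=\gamma=1$ and arbitrary $p_0\ge1$. For the stability part (1), I would first establish moment bounds on the numerical flow $\widehat\Psi_{0,t_n}$ itself: each one-step map is an explicit composition of affine-in-$v$ shears and $E$/$\sigma$-evaluations at updated positions, so $|\widehat\Psi^2_{t_{n-1},t_n}(x,v)-v|_{\ell^\infty}\le\widetilde\Xi_n$ by \eqref{eq:Xin}, and summing over steps gives $\E[\|\widehat\Psi_{0,t_n}(x,v)\|^p]\le C(p)(1+\|x\|^p+\|v\|^p)$ using $\E[|\delta\beta_{n,k}|^p]\le C\tau^{p/2}$ and a discrete Gr\"onwall/telescoping argument. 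Then I would differentiate the recursion once and twice in $(x,v)$: since $E,\sigma\in C^2_b$, the chain rule expresses $D\widehat\Psi_{t_{n-1},t_n}$ and $D^2\widehat\Psi_{t_{n-1},t_n}$ as products of matrices whose entries are bounded by $1+C\tau+C|\delta\beta_n|$, so the iterated Jacobians satisfy $\|D\widehat\Psi_{0,t_n}\|\le\prod_{m=1}^n(1+C\tau+C|\delta\beta_m|)$ and a similar but slightly more involved bound for the Hessian. Taking $p$th moments and using independence of the increments together with $\E[(1+C\tau+C|\delta\beta_m|)^p]\le 1+C\tau$ (the $\tau^{1/2}$-term vanishes in expectation after the binomial expansion up to $O(\tau)$) yields the uniform bound $C(p)$; the same argument applies verbatim to the forward flow $\Psi_{0,t_n}$, whose one-step maps are the explicit inverses.

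For the convergence part (2), the plan is a standard one-step/global-error decomposition comparing $\widehat\Psi_{0,t_n}$ with the exact inverse flow $\widehat\phi_{0,t_n}$. I would first prove a \emph{local} error estimate: for one step, $(\E[\|\widehat\Psi_{t_{n-1},t_n}(x,v)-\widehat\phi_{t_{n-1},t_n}(x,v)\|^{2p_0}])^{1/2p_0}\le C\tau^{3/2}(1+\|x\|+\|v\|)$, obtained by comparing the explicit update formulas \eqref{eq:inv-Phi}/\eqref{eq:inv-LTS}/\eqref{eq:inv-SS} against the It\^o--Taylor expansion of the backward SDE flow over $[t_{n-1},t_n]$ — the deterministic drift pieces match to $O(\tau^2)$ (consistency of symplectic/Strang/Lie splitting), and the diffusion piece $\sigma(\cdot)\delta\beta_n$ matches the leading stochastic term, leaving a remainder that is $O(\tau^{3/2})$ in $L^{2p_0}$ after accounting for the one mismatched iterated stochastic integral (which has mean zero, so its contribution to the \emph{mean-square} error is $O(\tau)$ per step, $O(\tau^{1/2})$ globally — but the mean-zero cancellation against the exact flow upgrades this). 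Then I would run the global recursion $e_n:=\widehat\Psi_{0,t_n}(x,v)-\widehat\phi_{0,t_n}(x,v)$, splitting $e_n$ into the propagated previous error $\widehat\Psi_{t_{n-1},t_n}(\phi\text{-value})-\widehat\Psi_{t_{n-1},t_n}(\Psi\text{-value})$ — controlled in $L^{2p_0}$ by the Lipschitz/stability bound on $D\widehat\Psi$ from part (1) — plus the one-step error; a discrete Gr\"onwall inequality together with the Burkholder--Davis--Gundy inequality to handle the martingale part of the accumulated one-step errors then gives global order $\tau^{1}$, i.e.\ $Q=1$.

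The main obstacle I anticipate is the \emph{linear-in-$\|(x,v)\|$ growth factor} in both conditions: because the velocity component of $\widehat\Psi$ translates by $\widetilde\Xi_n$ and the position component by $\tau v$ each step, naive bounds would produce a factor growing like $(1+\|v\|)^{n}$ rather than a $\tau$-independent constant, so the Gr\"onwall argument must be set up on the \emph{differences} (where the affine translations cancel) and the moment/derivative estimates must be structured so that the stochastic increments enter only multiplicatively through bounded coefficients ($E,\sigma\in C^2_b$) — this is exactly why boundedness of the first two derivatives of $E$ and $\sigma$ is hypothesized. A secondary technical point is obtaining the $\tau^{3/2}$ (rather than merely $\tau$) local strong order: this requires carefully identifying that the single leading stochastic error term is a genuine iterated integral $\int\!\int \ud\beta\,\ud s$ or $\int\!\int \ud\beta\,\ud\beta$ of order $\tau$ in $L^2$ but mean zero, so that when it is inserted into the global telescoping sum the martingale structure (via BDG) recovers one extra half power — I would flag that the argument is cleanest if done directly at the level of the mean-square global error rather than through a pathwise local bound.
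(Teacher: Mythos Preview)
Your overall strategy matches the paper's: stability via moment bounds on the iterated Jacobian, and convergence via local one-step estimates followed by a global Gr\"onwall/martingale argument. The paper packages both parts by citation --- it refers to \cite[Lemma 1.5]{MT21} for stability and to the fundamental mean-square convergence theorem \cite[Theorem 1.1.1]{MT21}, \cite[Theorem 2.1]{TZ13} for convergence, reducing the task to verifying the two local bounds
\[
\bigl\|\E[\widehat\Psi_{t-\tau,t}-\widehat\phi_{t-\tau,t}]\bigr\|\le C\tau^2(1+\|x\|^2+\|v\|^2)^{1/2},\qquad
\bigl(\E\|\widehat\Psi_{t-\tau,t}-\widehat\phi_{t-\tau,t}\|^{2p_0}\bigr)^{1/2p_0}\le C\tau^{3/2}(1+\|x\|^2+\|v\|^2)^{1/2},
\]
which is exactly the content you describe redoing by hand. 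One point the paper makes explicit and you leave implicit: the inverse flow $\widehat\phi_{s,t}$ solves a \emph{backward} SDE whose backward Stratonovich and backward It\^o integrals coincide (the $x$-component has zero quadratic covariation with $\beta$), so the expectation of the backward It\^o integral vanishes and the usual machinery applies.

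There is, however, one concrete slip in your stability argument that would make it fail as written. You bound $\|D\widehat\Psi_{0,t_n}\|\le\prod_m(1+C\tau+C|\delta\beta_m|)$ and then claim $\E[(1+C\tau+C|\delta\beta_m|)^p]\le 1+C\tau$ because ``the $\tau^{1/2}$-term vanishes in expectation.'' It does not: $\E|\delta\beta_m|=\sqrt{2\tau/\pi}\ne0$, so that expectation is $1+O(\sqrt\tau)$, and the product over $n=T/\tau$ steps blows up like $\exp(C/\sqrt\tau)$. The pathwise bound throws away the sign of $\delta\beta_m$ and with it the cancellation you need. The correct route is to keep the signed increment in the matrix recursion: writing $D\widehat\Psi_{t_{m-1},t_m}=I+\tau A_m+B_m\delta\beta_m$ with $A_m,B_m$ bounded (by $C^2_b$ of $E,\sigma$) and $\sigma(\delta\beta_{m+1},\dots,\delta\beta_n)$-measurable, set $Q_m:=D\widehat\Psi_{t_{m-1},t_m}\cdots D\widehat\Psi_{t_{n-1},t_n}$ and take $\E[\|Q_m\|^p]$ conditioning on $\mathscr F_{t_m,t_n}$; then $\E[\delta\beta_m\mid\mathscr F_{t_m,t_n}]=0$ kills the $O(\sqrt\tau)$ cross term and yields $\E[\|Q_m\|^p]\le(1+C\tau)\E[\|Q_{m+1}\|^p]$, hence the uniform bound.

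A smaller point on the convergence side: your sketch blurs the two separate local estimates. The Milstein--Tretyakov mechanism needs \emph{both} the local strong error $O(\tau^{3/2})$ \emph{and} the local bias $\|\E[\cdot]\|=O(\tau^2)$; the latter is what your ``mean-zero cancellation'' is really buying, and it should be checked as a standalone estimate (it is what allows the telescoped drift part to sum to $N\cdot\tau^2=O(\tau)$, while BDG handles the fluctuation part as $\sqrt{N\tau^3}=O(\tau)$). Stating it that way also makes the growth exponent $\gamma=1$ transparent.
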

\begin{proof}
We only give a sketch of the proof for the convergence condition in Hypothesis \ref{Asp:2}(2) since the proof of the stability condition in Hypothesis \ref{Asp:2}(1) is standard (see, e.g., \cite[Lemma 1.5]{MT21}).
For any $0\le s< t\le T$ and $(x,v)\in\mathbb{T}^d\times \R^{d}$,
$$\widehat{\phi}_{s,t}(x,v)=\left(\begin{array}{c}x \\v\end{array}\right)+\int_{s}^{t} \left(\begin{array}{c}\widehat{\phi}_{r,t}^2(x,v) \\E(r,\widehat{\phi}_{r,t}^1(x,v))\end{array}\right)\ud r+\int_{s}^{t} \left(\begin{array}{c}0 \\\sigma(\widehat{\phi}_{r,t}^1(x,v))\end{array}\right)\odot\ud \beta(r),$$
where $\widehat{\phi}_{r,t}^1(x,v)$ and $\widehat{\phi}_{r,t}^2(x,v)$ denote the components of $\widehat{\phi}_{r,t}(x,v)$ in the physical space and velocity space, respectively.
Here, the stochastic integral is understood in the sense of the backward Stratonovitch integral. It coincides with the backward It\^o integral since the quadratic covariation between $\widehat{\phi}_{r,t}^1(x,v)$ and $\{\beta_k\}_{k=1}^K$ vanishes (see \cite{PP87} for more details).
According to the fundamental mean-square convergence theorem (see, e.g., \cite[Theorem 1.1.1]{MT21} and \cite[Theorem 2.1]{TZ13}), 
to prove Hypothesis \ref{Asp:2}(2) with $Q=\gamma=1$ and $p_0\ge1$,
it suffices to show that  for any $\tau\le t\le T$ and $(x,v)\in\mathbb{T}^d\times \R^{d}$,
\begin{gather}\label{eq:localmean}
\left\|\E\left[\widehat{\Psi}_{t-\tau,t}(x,v)-\widehat{\phi}_{t-\tau,t}(x,v)\right]\right\|\le C\tau^2(1+\|x\|^2+\|v\|^2)^{\frac12},\\\label{eq:localmq}
\left(\E\left[\|\widehat{\Psi}_{t-\tau,t}(x,v)-\widehat{\phi}_{t-\tau,t}(x,v)\|^{2p_0}\right]\right)^{\frac{1}{2p_0}}\le C(p_0)\tau^{\frac32}(1+\|x\|^2+\|v\|^{2})^{\frac{1}{2}}.
\end{gather}
By the property that the expectation of the backward It\^o integral vanishes, it can be shown that the one-step maps generated by \eqref{eq:inv-Phi}, \eqref{eq:inv-LTS}, and \eqref{eq:inv-SS}
satisfy \eqref{eq:localmean} and \eqref{eq:localmq}. We omit further details since the procedure is the same as in the forward flow setting 
(see, e.g., \cite[Theorem 4]{CCDL20}).
\end{proof}

In the following, let $f_n^{\delta y,\tau,\epsilon_0}$ be generated by Algorithm \ref{Algo:2} with the Lagrange first-order interpolation.
To study the mean-square error of Algorithm \ref{Algo:2}, we introduce an auxiliary sequence $\{F_n^{\delta y,\tau,\epsilon_0}\}_{n=0}^N$.
Let $\mathcal {M}_{\mathscr{X}}$ stand for the set of grid points of a rectangular partition for $\mathscr{X}=\mathbb{T}^d\times\R^d$ (see \eqref{eq:MDb}). 
Define 
$\tilde{F}^{\delta y,\tau,\epsilon_0}_{n}(x^*,v^*):=0$ for $(x^*,v^*)\in\mathcal{M}_\mathscr{X}\cap\mathscr{X}_n^{c}$ and $\tilde{F}^{\delta y,\tau,\epsilon_0}_{n}(x^*,v^*):={f}^{\delta y,\tau,\epsilon_0}_{n}(x^*,v^*)$ for $(x^*,v^*)\in \mathcal{M}_\mathscr{X}\cap\mathscr{X}_n$. Then for any $n=0,1,\ldots,N$, we define
\begin{align*}
F^{\delta y,\tau,\epsilon_0}_{n}:=(\mathscr{R}_{\mathscr{X}}^{\delta y}\tilde{F}^{\delta y,\tau,\epsilon_0}_{n})\cdot\mathbf{1}_{\mathscr{X}_n}.
\end{align*}
 Note that $F^{\delta y,\tau,\epsilon_0}_{n}$
is the zero extension of $f^{\delta y,\tau,\epsilon_0}_{n}$ to $\mathscr{X}$.
 Specially, $F^{\delta y,\tau,\epsilon_0}_{n}=f^{\delta y,\tau,\epsilon_0}_{n}$ on $\mathscr{X}_n$ for every $n=0,1,\ldots,N$.
For any $(x^*,v^*)\in\mathcal{M}_{\mathscr{X}_n}$,
$f^{\delta y,\tau,\epsilon_0}_{n}(x^*,v^*)=F^{\delta y,\tau,\epsilon_0}_{n-1}(\widehat{\Psi}_{t_{n-1},t_{n}}(x^*,v^*)),$
which implies that for any $(x^*,v^*)\in\mathcal{M}_{\mathscr{X}}$,
\begin{equation*}
\tilde{F}^{\delta y,\tau,\epsilon_0}_n(x^*,v^*)=F^{\delta y,\tau,\epsilon_0}_{n-1}(\widehat{\Psi}_{t_{n-1},t_{n}}(x^*,v^*))\cdot\mathbf{1}_{\mathscr{X}_n}(x^*,v^*).
\end{equation*}
Hence, Algorithm \ref{Algo:2} can be summarised as
\begin{align}\label{eq:AdSL}
F^{\delta y,\tau,\epsilon_0}_{n}=(\mathscr{R}_{\mathscr{X}}^{\delta y}([F^{\delta y,\tau,\epsilon_0}_{n-1}\circ\widehat{\Psi}_{t_{n-1},t_{n}}]\cdot\mathbf{1}_{\mathscr{X}_n}))\cdot\mathbf{1}_{\mathscr{X}_n}
\end{align}
for $n=1,\ldots,N$
with the initial value $F^{\delta y,\tau,\epsilon_0}_{0}=f_0\cdot \mathbf{1}_{\mathscr{X}_0}$. Here, $\delta y,\tau$, and $\epsilon_0$ are given in Algorithm \ref{Algo:2}.

\begin{theorem}\label{thm:DSLC}
Assume that Hypotheses \ref{Asp:1}(A1)-(A2) and \ref{Hyp:initial} hold. 
Let $F^{\delta y,\tau,\epsilon_0}_N$ be given by \eqref{eq:AdSL} with 
the inverse numerical flow $\{\widehat{\Psi}_{t_{n-1},t_n}\}_{n=0}^N$ satisfying Hypothesis \ref{Asp:2}, and let $\mathscr{R}_{\mathscr{X}}^{\delta y}$
be the Lagrange first-order interpolation.
Then there exists a positive constant $C$ such that for any $R>0$,
\begin{equation*}
\max_{(x,v)\in\mathbb{T}^d\times [-R,R]^d}\left(\E\left[|F^{\delta y,\tau,\epsilon_0}_N(x,v)-f(T,x,v)|^2\right]\right)^{\frac12}\le CR^\gamma\tau^Q +C\tau^{-1}(h^2+\epsilon_0),
\end{equation*}
where $\gamma$ and $Q$ are given in Hypothsis \ref{Asp:2}.
\end{theorem}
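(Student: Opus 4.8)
The plan is to decompose the global error into a discretization error coming from the temporal semi-discretization \eqref{eq:semi-dis} and a propagated interpolation-plus-truncation error coming from the reconstruction step in Algorithm \ref{Algo:2}. Writing $f^\tau_n$ for the semi-discrete solution, I would insert it as an intermediate: on $\mathscr{X}_N$,
\[
F^{\delta y,\tau,\epsilon_0}_N(x,v)-f(T,x,v)=\bigl(F^{\delta y,\tau,\epsilon_0}_N(x,v)-f^\tau_N(x,v)\bigr)+\bigl(f^\tau_N(x,v)-f(T,x,v)\bigr).
\]
The second term is controlled directly by \eqref{eq:SC-semi}, which under Hypothesis \ref{Asp:2} gives a contribution of order $\tau^Q(1+\|x\|^{2\gamma}+\|v\|^{2\gamma})^{1/2}$, hence $CR^\gamma\tau^Q$ on the box $[-R,R]^d$. (This is precisely where the auxiliary semi-discrete process matters: applying the fundamental mean-square convergence theorem to $f^\tau_n$ rather than to $F_n$ directly is what upgrades the naive half-order to order $Q=1$; cf. Remark \ref{rem:secconvergence}.)

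**Propagation of the reconstruction error.** The core of the argument is bounding $e_n:=\sup_{(x,v)\in\mathscr{X}_n}(\E[|F^{\delta y,\tau,\epsilon_0}_n(x,v)-f^\tau_n(x,v)|^2])^{1/2}$ by a one-step recursion. Using \eqref{eq:AdSL} and $f^\tau_n=f^\tau_{n-1}\circ\widehat{\Psi}_{t_{n-1},t_n}$, I would write
\[
F^{\delta y,\tau,\epsilon_0}_n-f^\tau_n
=\underbrace{\mathscr{R}^{\delta y}_{\mathscr{X}}\bigl([F^{\delta y,\tau,\epsilon_0}_{n-1}\circ\widehat\Psi]\mathbf 1_{\mathscr{X}_n}\bigr)-[F^{\delta y,\tau,\epsilon_0}_{n-1}\circ\widehat\Psi]\mathbf 1_{\mathscr{X}_n}}_{\text{interpolation error}}
+\underbrace{[F^{\delta y,\tau,\epsilon_0}_{n-1}\circ\widehat\Psi]\mathbf 1_{\mathscr{X}_n}-f^\tau_{n-1}\circ\widehat\Psi}_{\text{propagated + truncation}}
\]
on $\mathscr{X}_n$. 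For the interpolation error I would use \eqref{eq:Inerror}, but applied to the function $F^{\delta y,\tau,\epsilon_0}_{n-1}\circ\widehat\Psi$, which is only piecewise-smooth (it is an affine-by-cells interpolant composed with the numerical flow); so rather than $\|D^2\|_{L^\infty}$ I would bound the second-order term by the Lipschitz constant of $F^{\delta y,\tau,\epsilon_0}_{n-1}$ (itself controlled by $\mathrm{Lip}(f_0)$ and the accumulated Jacobian bounds from Hypothesis \ref{Asp:2}(1)) times $\|D\widehat\Psi_{t_{n-1},t_n}\|$, times $h$ — wait, more carefully, the standard first-order interpolation estimate on a piecewise-linear function gives an $O(h^2\|D^2\|)$ bound only where the composite is $C^2$; across cell interfaces of $F_{n-1}$ the composite has kinks, and there the interpolation error is still $O(h)\cdot\mathrm{Lip}$, but the measure of affected cells is $O(h)$ relative to the full grid — this careful accounting is needed to recover the clean $h^2$. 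Alternatively, and more cleanly, I would iterate the $L^\infty$ interpolation bound against the true semi-discrete solution: write $F_{n-1}=f^\tau_{n-1}+(F_{n-1}-f^\tau_{n-1})$, use that $f^\tau_{n-1}$ is $C^2$ in $(x,v)$ with Hessian bounded by $C(1+\|x\|^{2\gamma}+\dots)$ via \eqref{eq:phideri}, and absorb the low-regularity correction into $e_{n-1}$. This yields a one-step interpolation contribution of size $Ch^2\cdot(\text{polynomial in }U_n)$.

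**The truncation term and the recursion.** The term $([F_{n-1}\circ\widehat\Psi]\mathbf 1_{\mathscr{X}_n}-f^\tau_{n-1}\circ\widehat\Psi)$ splits as $(F_{n-1}-f^\tau_{n-1})\circ\widehat\Psi$ on the part of $\mathscr{X}_n$ whose preimage lies in $\mathscr{X}_{n-1}$ — contributing $e_{n-1}$ — plus a boundary contribution where $\widehat\Psi_{t_{n-1},t_n}(x^*,v^*)\notin\mathscr{X}_{n-1}$, i.e. where the algorithm sets the value to $0$. Here the dynamic-domain construction \eqref{eq:Un}–\eqref{eq:Xin} is essential: by \eqref{eq:Xin} the velocity displacement in one step is at most $\widetilde\Xi_n\le\Xi_n$, so a grid point of $\mathscr{X}_n$ with preimage outside $\mathscr{X}_{n-1}$ must lie in the shell $\hat\Omega$; and the rule \eqref{eq:Un} guarantees that on $\hat\Omega$ either $|f^{\delta y,\tau,\epsilon_0}_{n-1}|\le\epsilon_0$ (so the discarded value is $\le\epsilon_0+e_{n-1}$) or $U_n$ was enlarged so that the shell is in fact interior and nothing is discarded. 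Hence the truncation contribution is bounded by $\epsilon_0+e_{n-1}$ up to constants. Assembling, I get $e_n\le(1+C\tau)e_{n-1}+C(h^2+\epsilon_0)$ — the factor $1+C\tau$ coming from stability of $\widehat\Psi$ in the $L^2_\omega$-with-weight sense (Hypothesis \ref{Asp:2}(1)), after moving the polynomial-in-$\|v\|$ weights out using that on $\mathscr{X}_n$ we have $\|v\|\le U_n$ and controlling $\E[U_N^{2\gamma p}]$ via $\Xi=\sum\widetilde\Xi_n$ and moment bounds on Brownian increments. Discrete Grönwall then gives $e_N\le C\tau^{-1}(h^2+\epsilon_0)$ since there are $N=T/\tau$ steps and each contributes $C(h^2+\epsilon_0)$ amplified by $(1+C\tau)^N\le e^{CT}$. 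Combining with the $CR^\gamma\tau^Q$ bound on the semi-discretization error, and noting that for $(x,v)\in\mathbb{T}^d\times[-R,R]^d$ we may take $\mathscr{X}_N\supset\mathbb{T}^d\times[-R,R]^d$ with probability controlled appropriately (or restrict $R\le U_0$ and argue the domain only grows), finishes the proof.

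**Main obstacle.** The delicate point is the interpolation estimate for a piecewise-linear function composed with the random flow: \eqref{eq:Inerror} requires $C^2$ regularity, which the iterates $F^{\delta y,\tau,\epsilon_0}_{n-1}$ do not have. I expect the cleanest route is never to interpolate $F_{n-1}$ directly but to always compare against the genuinely smooth semi-discrete solution $f^\tau_{n-1}$, pushing all the non-smoothness into the accumulating error $e_{n-1}$ — this is what makes the auxiliary sequence $\{F_n^{\delta y,\tau,\epsilon_0}\}$ (zero-extended to all of $\mathscr{X}$) the right object and why the statement is phrased in terms of it. A secondary technical nuisance is keeping track of the $v$-polynomial weights $(1+\|v\|^{2\gamma})$ through the recursion without letting the random, unboundedly growing domain $U_n$ destroy the constants; this is handled by noting $U_N\le U_0+\Xi$ with $\E[\Xi^p]\le C_p(T,\tau^{-1}\cdot\tau^{1/2})^p$-type bounds, i.e. $U_N$ has all moments bounded uniformly once $\tau$ is fixed by $T/N$, and the $R$ in the final estimate is the user-chosen evaluation radius, not $U_N$.
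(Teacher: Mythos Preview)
Your high-level strategy is exactly right: split via the semi-discrete solution $f^\tau_n$, bound $f^\tau_N-f(T)$ by \eqref{eq:SC-semi}, and run a recursion on $\mathcal{E}_n:=F^{\delta y,\tau,\epsilon_0}_n-f^\tau_n$. You also correctly identify the main obstacle---the iterates are only piecewise linear so \eqref{eq:Inerror} cannot be applied to them---and its resolution: apply the interpolation-error estimate only to the smooth $f^\tau_n$. This is precisely the paper's decomposition: using linearity of $\mathscr{R}^{\delta y}_{\mathscr{X}}$ and $f^\tau_{n-1}\circ\widehat\Psi_{t_{n-1},t_n}=f^\tau_n$,
\[
\mathcal{E}_n = \mathscr{R}^{\delta y}_{\mathscr{X}}\bigl(\mathcal{E}_{n-1}\circ\widehat\Psi_{t_{n-1},t_n}\bigr) + \bigl(\mathscr{R}^{\delta y}_{\mathscr{X}} f^\tau_n - f^\tau_n\bigr) + (\text{truncation terms}),
\]
with the $Ch^2$ interpolation bound falling on $f^\tau_n=f_0\circ\widehat\Psi_{0,t_n}$, whose Hessian has \emph{uniformly} bounded second moments by Hypothesis~\ref{Asp:2}(1) and Fa\`a di Bruno.

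However, your propagation step has a genuine gap. You write that $(F_{n-1}-f^\tau_{n-1})\circ\widehat\Psi$ ``contributes $e_{n-1}$'', with possibly a factor $1+C\tau$ from ``stability of $\widehat\Psi$''. But $e_{n-1}=\sup_{(\bar x,\bar v)}\|\mathcal{E}_{n-1}(\bar x,\bar v)\|_{L^2(\Omega)}$ is a supremum over \emph{deterministic} points, whereas $\widehat\Psi_{t_{n-1},t_n}(x^*,v^*)$ is random and correlated with nothing you have controlled; one cannot bound $\|\mathcal{E}_{n-1}(\widehat\Psi(x^*,v^*))\|_{L^2(\Omega)}$ by $e_{n-1}$ via any Jacobian estimate on $\widehat\Psi$. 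The paper supplies two ingredients you do not mention: (i) $\widehat\Psi_{t_{n-1},t_n}\in\mathscr{F}_{t_{n-1},t_n}$ is \emph{independent} of $\mathcal{E}_{n-1}\in\mathscr{F}_{0,t_{n-1}}$, so one conditions on the flow and then takes the deterministic sup; (ii) Lagrange first-order interpolation is a \emph{convex combination} (the weights $\ell^{(\xi_i)}_{l_i,\delta y_i}$ are nonnegative and sum to $1$), which yields $\|\mathscr{R}^{\delta y}_{\mathscr{X}}(\mathcal{E}_{n-1}\circ\widehat\Psi)(x,v)\|_{L^2(\Omega)}\le e_{n-1}$ with constant exactly $1$. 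Relatedly, the $Ch^2$ bound on $\mathscr{R}^{\delta y}_{\mathscr{X}} f^\tau_n-f^\tau_n$ must be established \emph{pointwise} in $(x,v)$ (the paper's identity \eqref{eq:RgInte}) rather than via the uniform estimate \eqref{eq:Inerror}, since one needs $\sup_{(x,v)}\|\cdot\|_{L^2(\Omega)}$ rather than $\|\sup_{(x,v)}|\cdot|\|_{L^2(\Omega)}$; cf.\ Remark~\ref{rem:secconvergence}(2). With these in place the recursion is simply
\[
\sup_{(x,v)\in\mathscr{X}}\|\mathcal{E}_n(x,v)\|_{L^2(\Omega)}\le \sup_{(x,v)\in\mathscr{X}}\|\mathcal{E}_{n-1}(x,v)\|_{L^2(\Omega)}+Ch^2+2\epsilon_0,
\]
with no amplification and a uniform constant, so your entire discussion of $v$-polynomial weights and moment control of $U_N$ is unnecessary: the radius $R$ enters only through the semi-discretization term \eqref{eq:SC-semi}.
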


\begin{proof}
Thanks to \eqref{eq:SC-semi} and the triangle inequality, for any fixed $R>0$,
\begin{align}\label{eq:intermediate}
&\max_{(x,v)\in\mathbb{T}^d\times [-R,R]^d}\|F_N^{\delta y,\tau,\epsilon_0}(x,v)-f(T,x,v)\|_{L^2(\Omega)}\\\notag
&\le \max_{(x,v)\in\mathbb{T}^d\times [-R,R]^d}\|F_N^{\delta y,\tau,\epsilon_0}(x,v)-f_N^\tau(x,v)\|_{L^2(\Omega)}+CR^\gamma\tau^Q.
\end{align}
For $n=0,1,\ldots,N$, denote $\mathcal{E}_{n}:=F^{\delta y,\tau,\epsilon_0}_{n}-f^\tau_n$. 
For every $n=1,\ldots, N$, it follows from \eqref{eq:AdSL} and \eqref{eq:semi-dis} that for any $(x,v)\in\mathscr{X}$,
\begin{align}\label{eq:error-X}
	\mathcal{E}_{n}&=\mathscr{R}_{\mathscr{X}}^{\delta y}([F^{\delta y,\tau,\epsilon_0}_{n-1}\circ\widehat{\Psi}_{t_{n-1},t_{n}}]\cdot\mathbf{1}_{\mathscr{X}_n})\cdot\mathbf{1}_{\mathscr{X}_n}-f_{n-1}^\tau\circ\widehat{\Psi}_{t_{n-1},t_n}\\\notag
	&=\underbrace{\mathscr{R}_{\mathscr{X}}^{\delta y}(\mathcal{E}_{n-1}\circ\widehat{\Psi}_{t_{n-1},t_n})-\mathscr{R}_{\mathscr{X}}^{\delta y}([F^{\delta y,\tau,\epsilon_0}_{n-1}\circ\widehat{\Psi}_{t_{n-1},t_{n}}]\cdot\mathbf{1}_{\mathscr{X}_n})\cdot\mathbf{1}_{\mathscr{X}_n^c}}_{=:\mathcal{I}_1}\\\notag
	&\quad\underbrace{-\mathscr{R}_{\mathscr{X}}^{\delta y}([F^{\delta y,\tau,\epsilon_0}_{n-1}\circ\widehat{\Psi}_{t_{n-1},t_{n}}]\cdot\mathbf{1}_{\mathscr{X}_n^c})}_{=:\mathcal{I}_2}+\underbrace{\mathscr{R}_{\mathscr{X}}^{\delta y}f^\tau_n- f_{n}^\tau}_{=:\mathcal{I}_3}
\end{align}
due to the linearity of $\mathscr{R}_{\mathscr{X}}^{\delta y}$.

Let $(x,v)\in\mathscr{X}$ be arbitrarily fixed. 
Then there exists $\boldsymbol\xi:=(\mathbf{j},\mathbf{k})\in\mathbb{Z}^{2d}$ such that $(x,v)\in\square_{(\mathbf{j},\mathbf{k})}$.
By \eqref{eq:blinearinterp} and \eqref{eq:Rycell},
for $y:=(x,v)\in\square_{(\mathbf{j},\mathbf{k})}$,
\begin{align*}
&\|\mathscr{R}_{\mathscr{X}}^{\delta y}(\mathcal{E}_{n-1}\circ\widehat{\Psi}_{t_{n-1},t_n})(x,v)\|_{L^2(\Omega)}\\
&\le\sum_{l_1=\xi_1}^{\xi_1+1}\cdots\sum_{l_{2d}=\xi_{2d}}^{\xi_{2d}+1}\|\mathcal{E}_{n-1}(\widehat{\Psi}_{t_{n-1},t_n}(l_1\delta y_1,\ldots,l_{2d}\delta y_{2d}))\|_{L^2(\Omega)}\prod_{i=1}^{2d}\ell^{(\xi_i)}_{l_i,\delta y_i}(y_i).
\end{align*}
Since $\widehat{\Psi}_{t_{n-1},t_n}\in\mathscr{F}_{t_{n-1},t_n}$ and $\mathcal{E}_{n-1}\in\mathscr{F}_{0,t_{n-1}}$ are independent, we obtain that for any $(l_1\delta y_1,\ldots,l_{2d}\delta y_{2d})\in\mathcal M_{\mathscr{X}}$,
\begin{align*}
&\E\left[|\mathcal{E}_{n-1}(\widehat{\Psi}_{t_{n-1},t_n}(l_1\delta y_1,\ldots,l_{2d}\delta y_{2d}))|^2\right]\\
&=\E\left[\E\left[|\mathcal{E}_{n-1}(X,V)|^2\right]\mid_{(X,V)=\widehat{\Psi}_{t_{n-1},t_n}(l_1\delta y_1,\ldots,l_{2d}\delta y_{2d})} \right]\\
& \le
\max_{(\bar{x},\bar{v})\in\mathscr{X}}\E\left[|\mathcal{E}_{n-1}(\bar{x},\bar{v})|^2\right].
\end{align*}
By
$\ell^{(\xi_i)}_{\xi_i,\delta y_i}(z)+\ell^{(\xi_i)}_{\xi_i+1,\delta y_i}(z)=1$ for any $z\in[\xi_i\delta y_i,(\xi_i+1)\delta y_i]$, it then holds that 
\begin{equation}\label{eq:En-1}
\|\mathscr{R}_{\mathscr{X}}^{\delta y}(\mathcal{E}_{n-1}\circ\widehat{\Psi}_{t_{n-1},t_n})(x,v)\|_{L^2(\Omega)}\le\max_{(\bar{x},\bar{v})\in\mathscr{X}}\|\mathcal{E}_{n-1}(\bar{x},\bar{v})\|_{L^2(\Omega)}.
\end{equation}
Next, we estimate the error terms in \eqref{eq:error-X}, where $\mathcal{I}_1$ and $\mathcal{I}_2$ correspond to the truncation error, and  $\mathcal{I}_3$ is related to the interpolation error.

\textit{Interpolation error.}
We begin with a pointwise error estimate for the interpolation operator defined in \eqref{eq:Rycell}.
Let $\boldsymbol\xi=(\xi_1,\ldots,\xi_{2d})\in\mathbb{Z}^{2d}$ with $\xi_i\in\{0,1,\ldots,\frac{L}{\delta x_i}-1\}$ for $i=1,\ldots,d$. 
For each $r=0,1,\ldots,2d$, we denote 
\begin{equation*}
\mathscr{R}_{\delta y}^{(\xi_1,\ldots,\xi_r)}g(y):=\sum_{l_1=\xi_1}^{\xi_1+1}\cdots\sum_{l_r=\xi_r}^{\xi_r+1} g(l_1\delta y_1,\ldots,l_r\delta y_r,y_{r+1},\ldots,y_{2d}
)\prod_{i=1}^r\ell^{(\xi_i)}_{l_i,\delta y_i}(y_i) 
\end{equation*}
for $y\in \mathscr{X}$ with $(y_1,\ldots,y_r)\in\prod_{i=1}^{r}[\xi_i\delta y_i,(\xi_i+1)\delta y_i]$ so that 
\begin{align*}
g(z)-\mathscr{R}_{\delta y}^{\boldsymbol\xi}g(z)
=\sum_{r=0}^{2d-1}\left(\mathscr{R}_{\delta y}^{(\xi_1,\ldots,\xi_r)}g(z)-\mathscr{R}_{\delta y}^{(\xi_1,\ldots,\xi_{r+1})}g(z)\right),\quad z\in\square_{\boldsymbol\xi}.
\end{align*}
If $(y_1,\ldots,y_{r+1})\in \prod_{i=1}^{r+1}[\xi_i\delta y_i,(\xi_i+1)\delta y_i]$, then
\begin{align*}
&\mathscr{R}_{\delta y}^{(\xi_1,\ldots,\xi_r)}g(y)-\mathscr{R}_{\delta y}^{(\xi_1,\ldots,\xi_{r+1})}g(y)=\sum_{l_1=\xi_1}^{\xi_1+1}\cdots\sum_{l_r=\xi_r}^{\xi_r+1}
\delta _{\xi_1,\ldots,\xi_r}^{l_1,\ldots,l_{r}} g(y)
\prod_{i=1}^r\ell^{(\xi_i)}_{l_i,\delta y_i}(y_i),
\end{align*}
where 
\begin{align*}
\delta _{\xi_1,\ldots,\xi_{r}}^{l_1,\ldots,l_{r}}g(y)&:=g(l_1\delta y_1,\ldots,l_r\delta y_r,y_{r+1},\ldots,y_{2d})\\
&\quad-\sum_{l_{r+1}=\xi_{r+1}}^{\xi_{r+1}+1}g(l_1\delta y_1,\ldots,l_{r+1}\delta y_{r+1},y_{r+2},\ldots,y_{2d})\ell^{(\xi_{r+1})}_{l_{r+1},\delta y_{r+1}}(y_{r+1}).
\end{align*}
By the mean value theorem, for any twice continuously differentiable function $G:[s_1,s_2]\to \R$,  $G(s)-G(s_1)\frac{s-s_{2}}{s_1-s_2}-G(s_2)\frac{s-s_{1}}{s_2-s_1}$ equals
$$
\int_0^1\int_0^{\theta_1}G^{\prime\prime}(s_1+\theta_1(s_2-s_1)+\theta_2(s-s_2))\ud \theta_2\ud \theta_1(s-s_2)(s-s_1)$$
for $s\in[s_1,s_2]$. Consequently,
for any $y=(y_1,\ldots,y_{2d})\in\square_{\boldsymbol\xi}$, the interpolation error is bounded as
\begin{align}\label{eq:RgInte}
&|g(y)-\mathscr{R}_{\mathscr{X}}^{\delta y}g(y)|=|g(y)-\mathscr{R}_{\delta y}^{\boldsymbol\xi}g(y)|\\\notag
&\le \sum_{r=0}^{2d-1}\sum_{l_1=\xi_1}^{\xi_1+1}\cdots\sum_{l_r=\xi_r}^{\xi_r+1}
\int_0^1\int_0^{\theta_1}|\partial_{y_{r+1}}^2g(l_1\delta y_1,\ldots,l_r\delta y_r,y^{l_{r+1},\theta_1,\theta_2}_{r+1},\\\notag
&\qquad\qquad\qquad\qquad\qquad\qquad\qquad y_{r+2},\ldots,y_{2d})|\ud\theta_1\ud\theta_2|\delta y_{r+1}|^2
\prod_{i=1}^r\ell^{(\xi_i)}_{l_i,\delta y_i}(y_i)
\end{align}
with
$y^{l_{r+1},\theta_1,\theta_2}_{r+1}:=l_{r+1}\delta y_{r+1}+
\theta_1\delta y_{r+1}+\theta_2(y_{r+1}-(l_{r+1}+1)\delta y_{r+1})$. By the higher order
chain rules of Faà di Bruno type \cite{MR00}, $f_n^\tau(x,v)=f_0(\widehat{\Psi}_{0,t_n}(x,v))$, Hypothesis \ref{Asp:2}(1), and the H\"older inequality, we have
\begin{gather*}
\sup_{(x,v)\in\mathbb{T}^d\times\mathbb{R}^d}\E\left[\|D^2 f_n^\tau(x,v)\|^2\right]\le C\quad\forall~n\in\{0,1,\ldots,N\}.
\end{gather*}
Taking the $\|\cdot\|_{L^2(\Omega)}$-norm on both sides of \eqref{eq:RgInte} with $g=f_n^\tau$ yields
\begin{equation}\label{eq:IntererrorX}
\|\mathscr{R}_{\mathscr{X}}^{\delta y}f^\tau_n(x,v)- f_{n}^\tau(x,v)\|_{L^2(\Omega)}
\le Ch^2,
\end{equation}

\textit{Truncation error.}
The choice of $U_n$ in \eqref{eq:Un} ensures that
\begin{equation}\label{eq:flowerbound}
 |F_{n-1}^{\delta y,\tau,\epsilon_0}(\widehat{\Psi}_{t_{n-1},t_{n}}(x,v))|\le\epsilon_0 \quad 
 \forall~(x,v)\in\mathscr{X}\text{ with } |v|_{\ell^\infty}\ge U_{n}.
\end{equation}
Indeed, $|f_{n-1}^{\delta y,\tau,\epsilon_0}(x,v)|\le \epsilon_0$ for all $(x,v)\in\mathcal{M}_{\mathscr{X}_{n-1}}$ with $|v|_{\ell^\infty}\ge U_{n}-\Xi_n$, and thus 
$|F_{n-1}^{\delta y,\tau,\epsilon_0}(x,v)|\le \epsilon_0$
for all $(x,v)\in\mathscr{X}$ with $|v|_{\ell^\infty}\ge U_{n}-\Xi_n$.
Then by \eqref{eq:Xin}, for any $(x,v)\in\mathscr{X}$ with $|v|_{\ell^\infty}\ge U_{n}$, 
$$|\widehat{\Psi}^2_{t_{n-1},t_{n}}(x,v)|_{\ell^\infty}\ge |v|_{\ell^\infty}-|v-\widehat{\Psi}^2_{t_{n-1},t_{n}}(x,v)|_{\ell^\infty}\ge U_n-\Xi_n,$$
which proves the assertion \eqref{eq:flowerbound}. It follows from \eqref{eq:flowerbound} that
\begin{equation}\label{eq:Truncateerror1}
\|\mathscr{R}_{\mathscr{X}}^{\delta y}([F^{\delta y,\tau,\epsilon_0}_{n-1}\circ\widehat{\Psi}_{t_{n-1},t_{n}}]\cdot\mathbf{1}_{\mathscr{X}_n^c})\|_{L_{x,v}^\infty}\le \epsilon_0,\quad \textup{a.s.}
\end{equation}
If $(x,v)\in\mathscr{X}_n^c$, then there exists $(\mathbf{j},\mathbf{k})\in \mathbb{Z}^{2d}$ with
$0\le j_i\le \frac{L}{\delta x_i}-1$ and $k_i\le - \frac{U_n}{\delta v_i}-1$ or $k_i\ge\frac{U_n}{\delta v_i}$ for $i=1,\ldots,d$ 
such that $(x,v)\in\square_{(\mathbf{j},\mathbf{k})}.$ This, together with \eqref{eq:flowerbound}, yields that for any $(x,v)\in\mathscr{X}_n^c$, 
\begin{align}\label{eq:Truncateerror2}
&\quad\ \left|\mathscr{R}_{\mathscr{X}}^{\delta y}([F^{\delta y,\tau,\epsilon_0}_{n-1}\circ\widehat{\Psi}_{t_{n-1},t_{n}}]\cdot\mathbf{1}_{\mathscr{X}_n})(x,v)\right|\\\notag
&\le \max_{\substack{(x,v)\in\mathcal{M}_{\mathscr{X}},\\|v|_{\ell^\infty}\ge U_n}}|F^{\delta y,\tau,\epsilon_0}_{n-1}(\widehat{\Psi}_{t_{n-1},t_{n}}(x,v))|\le \epsilon_0,\quad \textup{a.s.}
\end{align}

Inserting the estimates \eqref{eq:En-1}, \eqref{eq:IntererrorX}, \eqref{eq:Truncateerror1}, and \eqref{eq:Truncateerror2} into \eqref{eq:error-X} leads to
\begin{align*}
\sup_{(x,v)\in\mathscr{X}}\|\mathcal{E}_{n}(x,v)\|_{L^2(\Omega)}\le \sup_{(x,v)\in\mathscr{X}}\|\mathcal{E}_{n-1}(x,v)\|_{L^2(\Omega)}+Ch^2+2\epsilon_0,
\end{align*}
which, along with the discrete Gronwall inequality and \eqref{eq:intermediate}, completes the proof.
\end{proof}
\begin{remark}\label{rem:secconvergence}

(1) If we directly adopt the analysis used in the deterministic case, the error between $F_n^{\delta y,\tau,\epsilon_0}$ and $f(t_n)$ can be divided in the
following way:
\begin{align*}
F_n^{\delta y,\tau,\epsilon_0}-f(t_n)
&=\mathscr{R}_{\mathscr{X}}^{\delta y}([F^{\delta y,\tau,\epsilon_0}_{n-1}\circ\widehat{\Psi}_{t_{n-1},t_{n}}]\cdot\mathbf{1}_{\mathscr{X}_n})\cdot\mathbf{1}_{\mathscr{X}_n}-f(t_{n-1})\circ\widehat{\phi}_{t_{n-1},t_n}\\\notag
	&=\mathscr{R}_{\mathscr{X}}^{\delta y}([F_{n-1}^{\delta y,\tau,\epsilon_0}-f(t_{n-1})]\circ\widehat{\Psi}_{t_{n-1},t_n})\\
	&\quad-\mathscr{R}_{\mathscr{X}}^{\delta y}([F^{\delta y,\tau,\epsilon_0}_{n-1}\circ\widehat{\Psi}_{t_{n-1},t_{n}}]\cdot\mathbf{1}_{\mathscr{X}_n})\cdot\mathbf{1}_{\mathscr{X}_n^c}\\\notag
	&\quad-\mathscr{R}_{\mathscr{X}}^{\delta y}([F^{\delta y,\tau,\epsilon_0}_{n-1}\circ\widehat{\Psi}_{t_{n-1},t_{n}}]\cdot\mathbf{1}_{\mathscr{X}_n^c})\\
	&\quad+\mathscr{R}_{\mathscr{X}}^{\delta y}[f(t_{n-1})\circ\widehat{\Psi}_{t_{n-1},t_n}]-f(t_{n-1})\circ\widehat{\Psi}_{t_{n-1},t_n}\\
	&\quad+f(t_{n-1})\circ\widehat{\Psi}_{t_{n-1},t_n}\!-\!f(t_{n-1})\circ\widehat{\phi}_{t_{n-1},t_n}=:J_1+J_2+J_3+J_4+J_5.	
\end{align*}
The error terms $J_1$, $J_2$, $J_3$, and $J_4$ can be estimated as shown in the proof of Theorem \ref{thm:DSLC}.  However, for the last error term $J_5$, by using \eqref{eq:localmq} with $t=t_n$, one can only obtain a local error estimate of order $3/2$. Then based on the discrete Gronwall inequality, the error arising from the time discretization achieves only a global convergence order of  $1/2$.
Hence, it is essential to introduce the temporal semi-discretization $f_n^\tau$ as an intermediate in analyzing the discretization error of the full discretization, as demonstrated  in \eqref{eq:intermediate}.

(2) In the estimate of the interpolation error, we use the pointwise equality \eqref{eq:RgInte} instead of the classical interpolation error inequality \eqref{eq:Inerror}. Otherwise, we would presumably need the following estimate
\begin{align*}
\E\left[\sup_{(x,v)\in\mathbb{T}^d\times\R^d}\|D^2\widehat{\phi}_{0,t}(x,v)\|^p\right]\le C,
\end{align*}
instead of relying on the weaker version \eqref{eq:phideri}. 
\end{remark}

The following result partially affirms the conjecture proposed in \cite{BC24}, demonstrating the first-order convergence for the Lie–Trotter splitting scheme in the mean-square sense, particularly for transport noise. The error analysis of Algorithm \ref{Algo:2} for the stochastic Vlasov–Poisson equation (Case II) is more involved, and we will investigate it in the future. 
\begin{corollary}\label{coro:concrete}
Assume that the functions $E$ and $\sigma$ are twice continuously differentiable with bounded derivatives, and that
$f_0$ satisfies Hypothesis \ref{Hyp:initial}. 
Let $F^{\delta y,\tau,\epsilon_0}_N$ be given by \eqref{eq:AdSL} with 
the inverse numerical flow $\{\widehat{\Psi}_{t_{n-1},t_n}\}_{n=0}^N$ generated by  \eqref{eq:inv-Phi}, \eqref{eq:inv-LTS} or \eqref{eq:inv-SS}.  Let $\mathscr{R}_{\mathscr{X}}^{\delta y}$
be the Lagrange first-order interpolation.
Then there exists a positive constant $C$ such that for any $R>0$,
\begin{equation*}
\max_{(x,v)\in\mathbb{T}^d\times [-R,R]^d}\left(\E\left[|F^{\delta y,\tau,\epsilon_0}_N(x,v)-f(T,x,v)|^2\right]\right)^{\frac12}\le CR \tau +C\tau^{-1}(h^2+\epsilon_0).
\end{equation*}
\end{corollary}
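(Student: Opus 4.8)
The plan is to obtain Corollary~\ref{coro:concrete} as a direct specialization of Theorem~\ref{thm:DSLC}, using Lemma~\ref{lem:local} to supply the hypotheses on the numerical flow. First I would observe that the assumptions of the corollary—namely that $E$ and $\sigma$ are twice continuously differentiable with bounded derivatives, and that $f_0$ satisfies Hypothesis~\ref{Hyp:initial}—together with the fact that $E$ being globally bounded and Lipschitz is implied by bounded first derivatives, ensure that Hypotheses~\ref{Asp:1}(A1)--(A2) and \ref{Hyp:initial} all hold. (Strictly, one should note that the solvability clause in Hypothesis~\ref{Asp:1}, i.e.\ the representation $f(t,x,v)=f_0(\widehat{\phi}_{0,t}(x,v))$, is the standard well-posedness statement for the stochastic linear Vlasov equation under these regularity assumptions, as recalled in section~\ref{S2}.)

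Next I would invoke Lemma~\ref{lem:local}: since $E$ and $\sigma$ are twice continuously differentiable with bounded derivatives, each of the inverse numerical flows generated by \eqref{eq:inv-Phi}, \eqref{eq:inv-LTS}, and \eqref{eq:inv-SS} satisfies Hypothesis~\ref{Asp:2} with the specific values $Q=\gamma=1$ and all $p_0\ge 1$. In particular the stability bound \eqref{eq:phideri}-type estimate on $\{\widehat{\Psi}_{0,t_n}\}$ and the mean-square convergence estimate of order $Q=1$ both hold for the flow appearing in \eqref{eq:AdSL}.

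Then I would apply Theorem~\ref{thm:DSLC} verbatim with this choice of $\{\widehat{\Psi}_{t_{n-1},t_n}\}$ and with $\mathscr{R}_{\mathscr{X}}^{\delta y}$ the Lagrange first-order interpolation, which gives
\begin{equation*}
\max_{(x,v)\in\mathbb{T}^d\times [-R,R]^d}\left(\E\left[|F^{\delta y,\tau,\epsilon_0}_N(x,v)-f(T,x,v)|^2\right]\right)^{\frac12}\le CR^\gamma\tau^Q +C\tau^{-1}(h^2+\epsilon_0).
\end{equation*}
Substituting $\gamma=1$ and $Q=1$ turns the right-hand side into $CR\tau+C\tau^{-1}(h^2+\epsilon_0)$, which is exactly the claimed bound. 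Since every step is an invocation of an already-established result, there is no genuine obstacle here; the only point requiring a line of care is checking that the regularity hypotheses of the corollary indeed subsume Hypotheses~\ref{Asp:1} and \ref{Hyp:initial}—in particular that "twice continuously differentiable with bounded derivatives" implies the global Lipschitz continuity of $\sigma$ in (A1) and the Lipschitz continuity of $E$ in (A2)—and that Lemma~\ref{lem:local} is applicable to whichever of the three splitting integrators is chosen. I would state the proof in two or three sentences accordingly.
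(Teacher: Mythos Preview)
Your proposal is correct and matches the paper's own proof, which simply states that the corollary is a direct consequence of Lemma~\ref{lem:local} and Theorem~\ref{thm:DSLC}. Your additional remarks verifying that the regularity assumptions subsume Hypotheses~\ref{Asp:1}(A1)--(A2) are a reasonable bit of extra care but not strictly needed.
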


\begin{proof}
This is a direct consequence of Lemma \ref{lem:local} and Theorem \ref{thm:DSLC}.
\end{proof}

\section{Numerical experiments}\label{S:NE}
In this section, to verify our theorectial findings, we consider the stochastic Vlasov equation with the following two initial densities
\begin{equation}\label{eq:LLDP}
f_0(x,v)=\frac{1}{\sqrt{2\pi}}\exp(-v^2/2)\Big(1+\alpha \cos(\frac{2\pi}{L}x)\Big)
\end{equation}
and
\begin{equation}\label{eq:TSI}
f_0(x,v)=\frac{1}{\sqrt{2\pi}}v^2\exp(-v^2/2)\Big(1+\alpha \cos(\frac{2\pi}{L}x)\Big)
\end{equation}
with the perturbation parameter $\alpha>0$. The stochastic Vlasov equation with the initial density
\eqref{eq:LLDP} corresponds to the linear Landau damping problem, which describes the damped propagation of a small-amplitude plasma wave (cf.\ \cite{CMS10}). 
The initial density \eqref{eq:TSI} is often used in the two stream instability problem, 
 for which a counter-streaming plasma flow exists in velocity space (cf.\ \cite{MCV17}).

\subsection{Accuracy and efficiency tests}\label{S5.1}
In this subsection, we test the convergence rate and computational cost of Algorithm \ref{Algo:2} with the Lagrange first-order interpolation by using the following example.
\begin{example}\label{exp:CA}
Consider the following stochastic linear Vlasov equation
\begin{align*}
	\ud_tf +\left(v\partial_xf+\cos(2\pi x)\partial_vf\right)\ud t
	+\sigma(x)\partial_v f\odot \ud \beta(t)=0,\quad t\in[0,T]
\end{align*}
in $\mathbb T\times\R$, where $f_0$ is given by \eqref{eq:LLDP} or \eqref{eq:TSI} with $\alpha=0.05$ and $L=1$. Here $\sigma:\mathbb{T}\to\R$ is globally Lipschitz continuous and $\{\beta(t)\}_{t\in[0,T]}$ is a standard 1-dimensional Brownian motion.
\end{example}

\begin{table} [!htbp]
\caption{Errors 
of Algorithm \ref{Algo:2} using the Lagrange first-order interpolation for Example \ref{exp:CA} with $\sigma(x)=0.5\sin(2\pi x)$: the initial density \eqref{eq:LLDP} in the linear Landau damping problem \textup{(}up\textup{)}, the initial density \eqref{eq:TSI} in the two stream instability problem \textup{(}down\textup{)}.}\label{T_error_space}
\centering
\resizebox{1\textwidth}{!}{
\begin{tblr}{
vline{1-5} = {},
cell{1}{1} = {r=2}{valign = m},
cell{1}{2} = {c=3}{halign = m},
cell{3-7}{1} = {c},
cell{1-2}{2-4} = {c}}
\hline
\diagboxthree{{Stepsize}}{Error (Order)}{Method}
& Linear Landau damping problem \\ \hline
& SEM & LTSM & SSM \\ \hline
$\tau = \frac{T}{16},\delta x=\delta v=1/64$ 
& $4.396e-2~(*)$ & $ 4.705e-2~(*)$ & $2.930e-2~(*)$ \\ 
\hline
\begin{tabular}[c]{@{}c@{}} 
$\tau = \frac{T}{32},\delta x=\delta v=1/128$ 
\end{tabular} 
& $2.247e-2~(0.97)$ & $2.415e-2~(0.96)$ & $1.475e-2~(0.99) $\\ 
\hline
\begin{tabular}[c]{@{}c@{}} 
$\tau = \frac{T}{64},\delta x=\delta v=1/256$ 
\end{tabular} 
& $1.155e-2~(0.96)$ & $1.081e-2~(1.16)$ & $6.902e-3~(1.10)$ \\ 
\hline
\begin{tabular}[c]{@{}c@{}} 
$\tau = \frac{T}{128},\delta x=\delta v=1/512$ 
\end{tabular} 
& $5.914e-3~(0.97)$ & $6.009e-3~(0.85)$ & $3.789e-3~(0.87)$ \\ 
\hline
\begin{tabular}[c]{@{}c@{}} 
$\tau = \frac{T}{256},\delta x=\delta v=1/1024$ 
\end{tabular} 
&$3.079e-3~(0.94)$ & $3.042e-3~(0.98)$ & $1.793e-3~(1.08)$ \\
\hline
\end{tblr}
}
\centering
\resizebox{1\textwidth}{!}{
\begin{tblr}{
vline{1-5} = {},
cell{1}{1} = {r=2}{valign = m},
cell{1}{2} = {c=3}{halign = m},
cell{3-7}{1} = {c},
cell{1-2}{2-4} = {c}}
\hline
\diagboxthree{{Stepsize}}{Error (Order)}{Method}
& Two stream instability problem \\ \hline
& SEM & LTSM & SSM \\ \hline
$\tau = \frac{T}{16},\delta x=\delta v=1/64$ 
& $5.155e-2~(*)$ & $ 5.020e-2~(*)$ & $3.576e-2~(*)$ \\ 
\hline
\begin{tabular}[c]{@{}c@{}} 
$\tau = \frac{T}{32},\delta x=\delta v=1/128$ 
\end{tabular} 
& $2.950e-2~(0.80)$ & $2.995e-2~(0.75)$ & $1.977e-2~(0.86) $\\ 
\hline
\begin{tabular}[c]{@{}c@{}} 
$\tau = \frac{T}{64},\delta x=\delta v=1/256$ 
\end{tabular} 
& $1.603e-2~(0.88)$ & $1.382e-2~(1.12)$ & $9.443e-3~(1.07)$ \\ 
\hline
\begin{tabular}[c]{@{}c@{}} 
$\tau = \frac{T}{128},\delta x=\delta v=1/512$ 
\end{tabular} 
& $7.830e-3~(1.03)$ & $7.026e-3~(0.98)$ & $5.088e-3~(0.89)$ \\ 
\hline
\begin{tabular}[c]{@{}c@{}} 
$\tau = \frac{T}{256},\delta x=\delta v=1/1024$ 
\end{tabular} 
&$4.114e-3~(0.93)$ & $3.467e-3~(1.02)$ & $2.459e-3~(1.05)$ \\
\hline
\end{tblr}
}
\end{table}

To validate the error bound in Theorem \ref{thm:DSLC} with $Q=1$, we maintain the ratio $h/\tau$ as a constant to test the discretization error of Algorithm \ref{Algo:2} for Example \ref{exp:CA} with $\sigma(x)=0.5\sin(2\pi x)$. In Tab.\ \ref{T_error_space}, we present the computational result of the error
\begin{align}\label{eq:Ehtau}
 \sup_{(x,v)\in[0,1]\times[-1,1]}\left(\E\left[|f_N^{\delta y,\tau,\epsilon_0}(x,v)-f_N^{2\delta y,2\tau,\epsilon_0}(x,v)|^2\right]\right)^{\frac12}
\end{align}
with $\delta y=(\delta x,\delta v)$ at $T = 1$ for a series of different stepsizes, where $\epsilon_0=f_0(0,U_0)$ with $U_0=6$, the supremum is obtained at the coarsest grid points in phase space, and the expectation is realized by 200 samples.
 The result in Tab.\ \ref{T_error_space} coincides with the error bound $C\tau\left[(h/\tau)^2+1\right]$ in Theorem \ref{thm:DSLC} with $Q=1$ (see also Corollary \ref{coro:concrete}). 
 As shown in Tab.\ \ref{T_error_space},
the discretization error of Algorithm \ref{Algo:2} with \eqref{eq:inv-SS} is smaller than those with
\eqref{eq:inv-Phi} and \eqref{eq:inv-LTS}. 

\begin{table}
 \caption{CPU time\textup{(}s\textup{)} of Algorithm \ref{Algo:2} using Lagrange first-order interpolation for Example \ref{exp:CA} with $\sigma(x)=0.5\sin(2\pi x)$
 and the initial density \eqref{eq:LLDP} in the linear Landau damping problem along a single sample path: $T=5$ \textup{(}up\textup{)} and $T=10$ \textup{(}down\textup{)}.}\label{Tabletime}
\centering
 \begin{tabular}{|c|c|c|c|c|c|}
 \hline
$T/N$ & Non-adaptive algorithm &Algorithm \ref{Algo:2}& Ratio \\ \hline
 5/300 & 28.184 & 4.568 & 6.2 \\ \hline
 5/400 & 44.034 & 6.201 & 7.1 \\ \hline
 5/500 & 60.631 & 7.678 & 7.9 \\ \hline
 5/600 & 82.141 & 8.695 & 9.5 \\ \hline
 \hline
 $T/N$ & Non-adaptive algorithm &Algorithm \ref{Algo:2} &Ratio \\ \hline
 10/600 & 106.081 & 9.894 & 10.7 \\ \hline
 10/800 & 169.629 & 12.871 & 13.2 \\ \hline
 10/1000 & 234.888 & 16.283 & 14.4 \\ \hline
 10/1200 & 320.077 & 18.794 & 17.0 \\ \hline
 \end{tabular}
\end{table}
As mentioned in Remark \ref{rem:1}, a traditional approach is to truncate the velocity domain at the $n$th step into the domain $[-U_0 - nA_\tau, U_0 + nA_\tau]^d$, and then use the semi-Lagrangian method to solve the stochastic Vlasov equation \eqref{eq:Vla}. For convenience, we refer to this method as the non-adaptive algorithm. In Tab.\ \ref{Tabletime}, we compare the CPU time(s) of the non-adaptive algorithm and Algorithm \ref{Algo:2} for Example \ref{exp:CA} with $\sigma(x)=0.5\sin(2\pi x)$ and the initial density \eqref{eq:LLDP} in the linear Landau damping problem. Here we use the threshold $\epsilon_0=10^{-8}$ and four different time stepsizes $\tau = 1/60, 1/80, 1/100, 1/120$ for two terminal times $T= 5$ and $T=10$. The initial velocity domain is truncated into $[-6,6]$, and the phase space stepsizes are set to be $\delta x = \delta v = 1/200$. It can be observed that Algorithm \ref{Algo:2} significantly improves the efficiency of the non-adaptive algorithm for stochastic problems, especially for small time stepsize $\tau$ and large terminal time $T$. 
\begin{figure}[!htb]
 \centering
 \subfigure[$\sigma\equiv0$]{\includegraphics[width=1\textwidth]{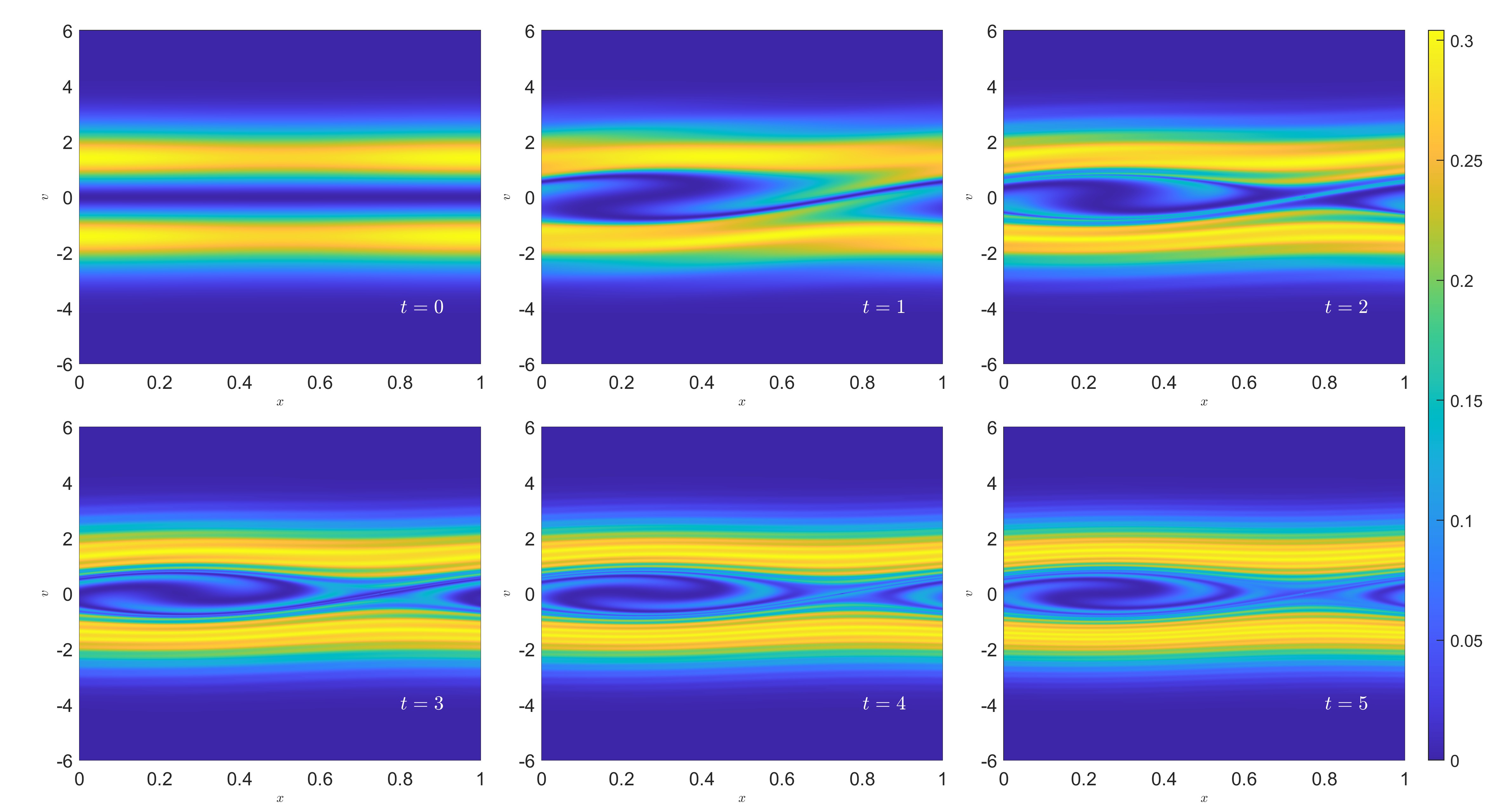}}
 \subfigure[$\sigma(x)=0.5(\cos(2\pi x)+1)$]{\includegraphics[width=1\textwidth]{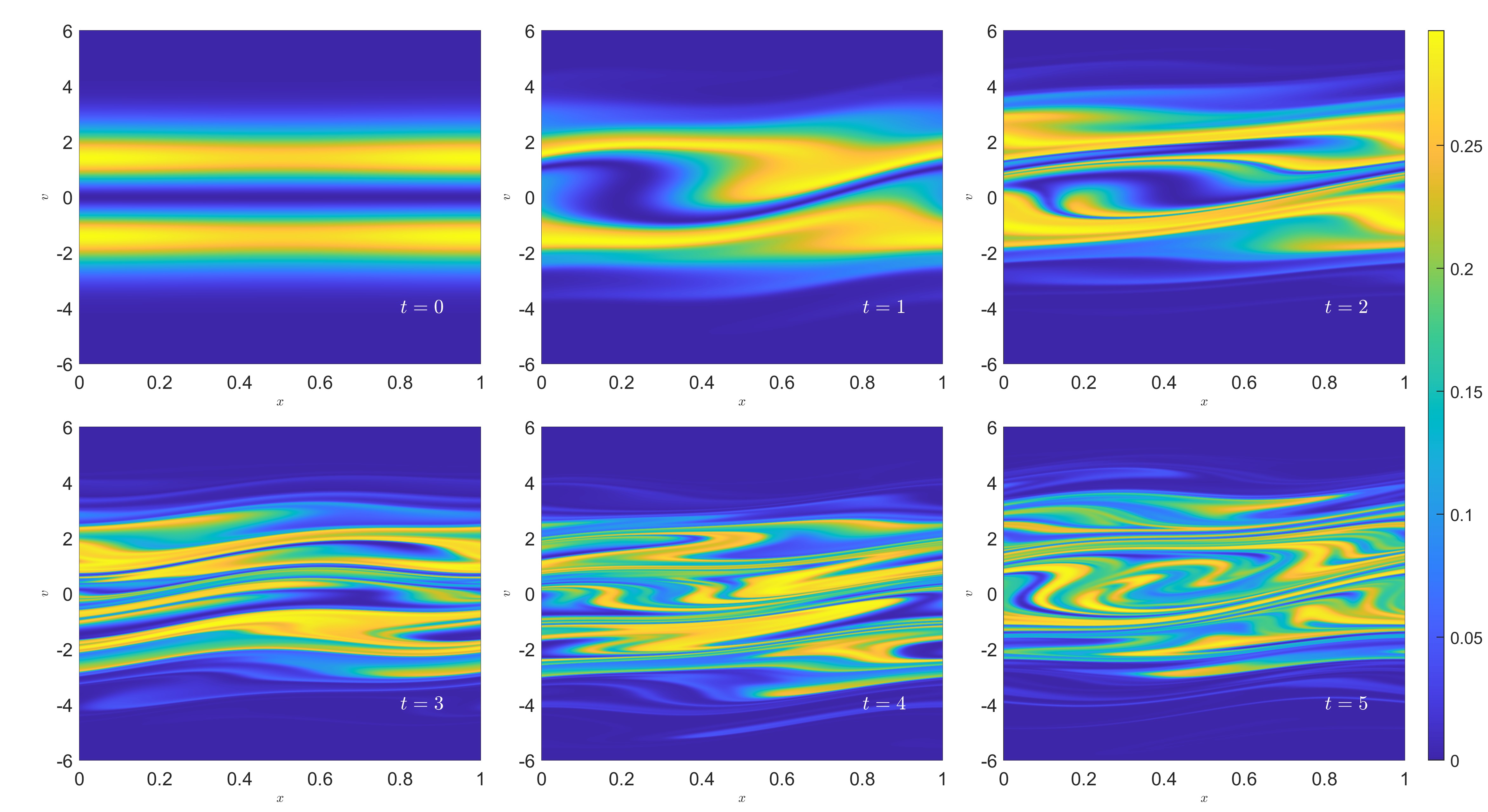}}
 \caption{Contour plot for Example \ref{exp:CA} with the initial density \eqref{eq:TSI} in the two stream instability problem at times $\{0,1,2,3,4,5\}$:  $\sigma\equiv0$ \textup{(}a\textup{)} and  $\sigma(x)=0.5(\cos(2\pi x)+1)$ \textup{(}b\textup{)}.
 }\label{FshapeTS-M}
\end{figure}

\subsection{Evolution of the solution}\label{S5.2}
In this subsection, we simulate the evolution of solutions of the stochastic linear Vlasov equation (i.e., Example \ref{exp:CA}) and the following stochastic Vlasov--Poisson equation to illustrate the impact of transport noise. In these tests, we adopt Algorithm \ref{Algo:2} with \eqref{eq:inv-SS} and the Lagrange first-order interpolation.
\begin{example}\label{exam:VP}
Consider the following stochastic Vlasov--Poisson equation
\begin{equation}\label{eq:VPtest}
\left\{
\begin{split}
	&\ud_tf +\left(v \partial_xf+E(t,x)\partial_vf\right)\ud t
	+\sigma(x)\partial_v f\odot \ud \beta(t)=0,\\
&\partial_{x} E(t,x)=\int_{\R}f(t,x,v)\ud v-1,
\end{split}
\right. \quad t\in[0,T]
\end{equation}
in $\mathbb{T}\times\R$, 
where $f_0$ is given by \eqref{eq:LLDP} or \eqref{eq:TSI} with $\alpha=0.01$ and $L=4\pi$. Here $\sigma:\mathbb{T}\to\R$ is globally Lipschitz continuous and $\{\beta(t)\}_{t\in[0,T]}$ is a standard 1-dimensional Brownian motion.
\end{example}

 \begin{figure}[htb]
 \centering
 \subfigure[$\sigma\equiv 0$]{\includegraphics[width=1\textwidth]{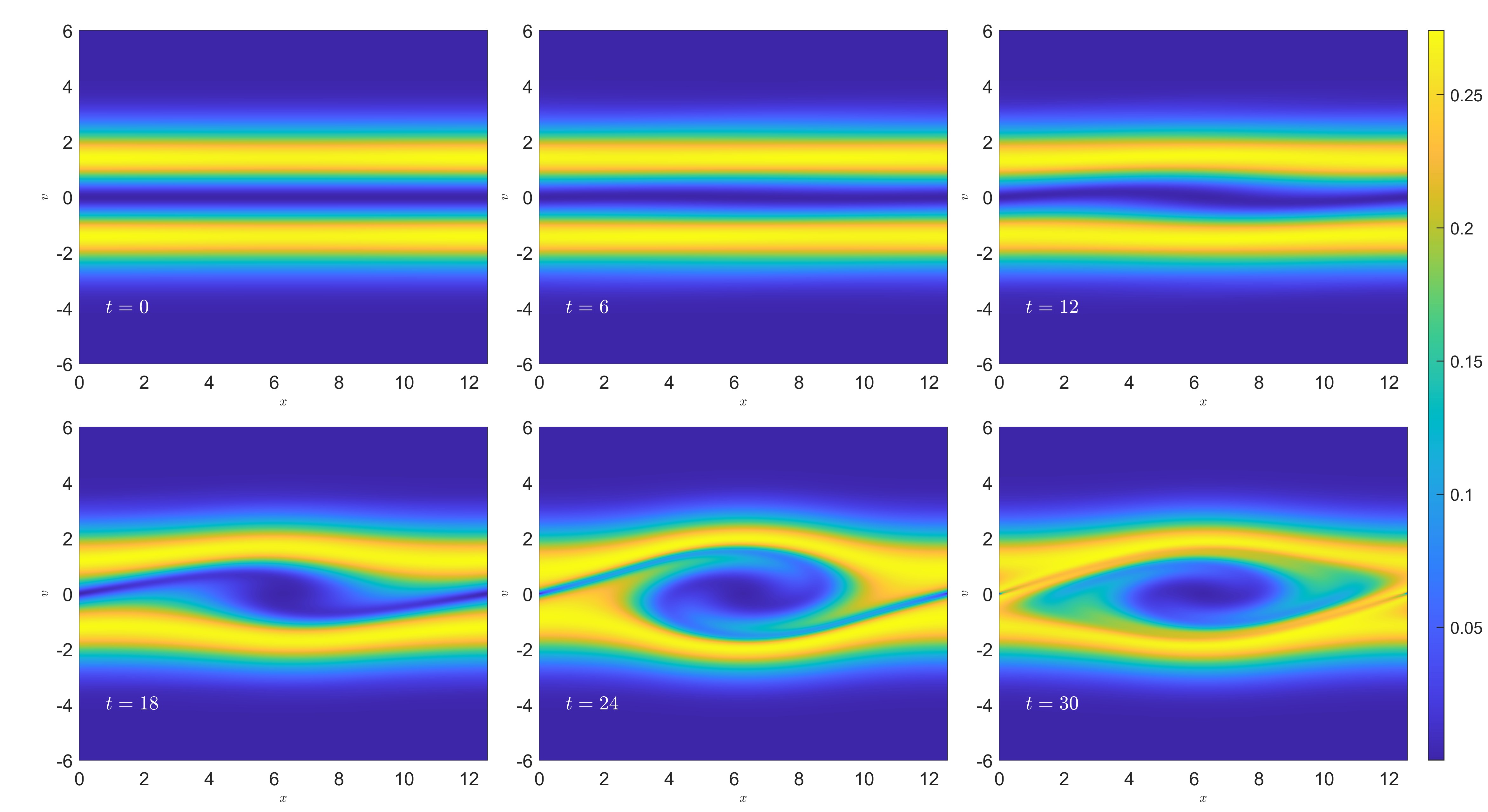}}
 \subfigure[$\sigma\equiv 0.5$]{\includegraphics[width=1\textwidth]{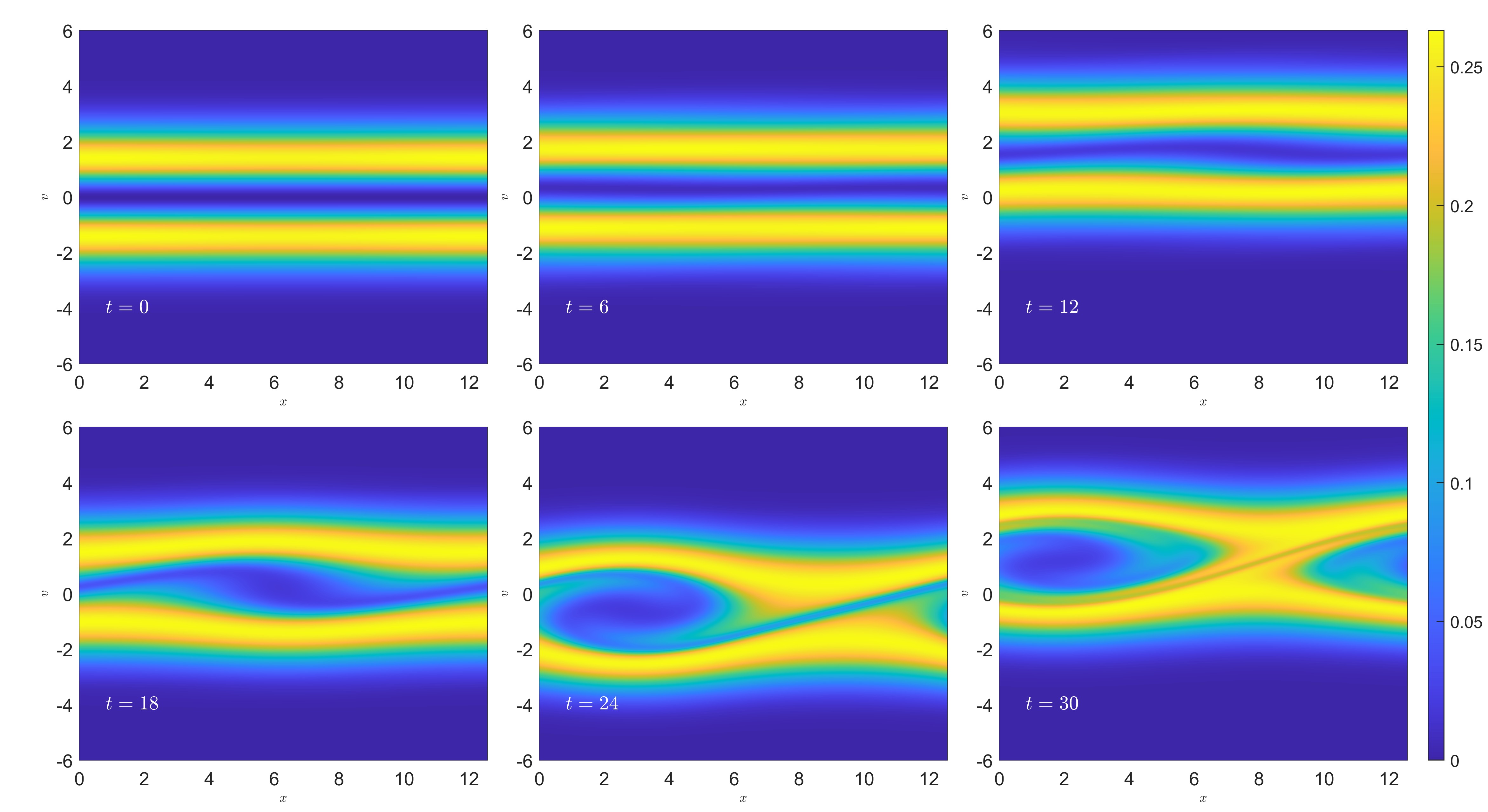}}
 \caption{Contour plot for Example \ref{exam:VP} with the initial density \eqref{eq:TSI} in the two stream instability problem at times $\{0,6,12,18,24,30\}$: $\sigma\equiv 0$ \textup{(}a\textup{)} and $\sigma\equiv 0.5$ \textup{(}b\textup{)}.}\label{VPshapeSTr}
\end{figure}

In Fig.\ \ref{FshapeTS-M}, snapshots of the numerical solution
 for Example \ref{exp:CA} with the initial density \eqref{eq:TSI} at $t= 0,1,2,3,4,5$ are displayed, with $\sigma\equiv 0$ (deterministic case) and $\sigma(x)=0.5(\cos(2\pi x)+1)$ (stochastic case).
 The parameters in Algorithm \ref{Algo:2} are $\tau=5/300$, $\delta x=\delta v=1/200$ and $\epsilon_0=f_0(0,U_0)$ with $U_0=6$. 
 It can be observed from Fig.\ \ref{FshapeTS-M}(a) that the vortices in the deterministic case remain stable and persist over time (see also \cite[Fig.\ 1]{BC24}).
In contrast, in the stochastic case, the vortices fail to fully develop and tend to dissipate due to the effect of transport noise, as shown in Fig.\ \ref{FshapeTS-M}(b). One can also observe that
the numerical solution in Fig.\ \ref{FshapeTS-M} remains non-negative, demonstrating the positivity-preserving property of Algorithm \ref{Algo:2} with the Lagrange first-order interpolation.

In Fig.~\ref{VPshapeSTr},
snapshots of the numerical solution for Example \ref{exam:VP} with the initial density \eqref{eq:TSI} at $t= 0,6,12,18,24,30$ are presented, 
 where we observe the vortex structure in both the deterministic case ($\sigma\equiv 0$) and stochastic case ($\sigma\equiv 0.5$). Here we take $\tau = 0.1$, $ \delta x = \delta v = 4\pi/256$, and $\epsilon_0=f_0(0,U_0)$ with $U_0=123\pi/64$.
 Fig.~\ref{VPshapeSTr}(a) 
 suggests that the instability grows rapidly and a hole structure appears from time $t=12$ to $t=18$. After $t=24$, the trapped particles oscillate within the electrostatic potential, and the vortex undergoes periodic rotation. Compared to the deterministic case, there are stochastic translations in both $x$ and $v$ directions for the vortex, as shown in Fig.\ \ref{VPshapeSTr}(b).
 This verifies that the solution of \eqref{eq:VPtest} with $\sigma$ being a constant is equivalent to that of the deterministic Vlasov–Poisson equation with a stochastic coordinate transformation (see Remark \ref{rem1}).

\subsection{Preservation of integrals}\label{S5.3}
In this subsection, we compare the traditional semi-Lagrangian method and Algorithm \ref{Algo:2} with \eqref{eq:inv-SS} and the Lagrange first-order interpolation to illustrate the necessity of enlarging the computational domain in phase space for the stochastic problem \eqref{eq:Vla}.
Meanwhile, we show that 
 Algorithm \ref{Algo:2} with \eqref{eq:inv-SS} exhibits a better performance in perserving the integrals, compared to that with the Euler--Maruyama method.
 
\begin{figure}[!htbp]
	\centering
	\subfigure[]{\includegraphics[width=0.35\textwidth]{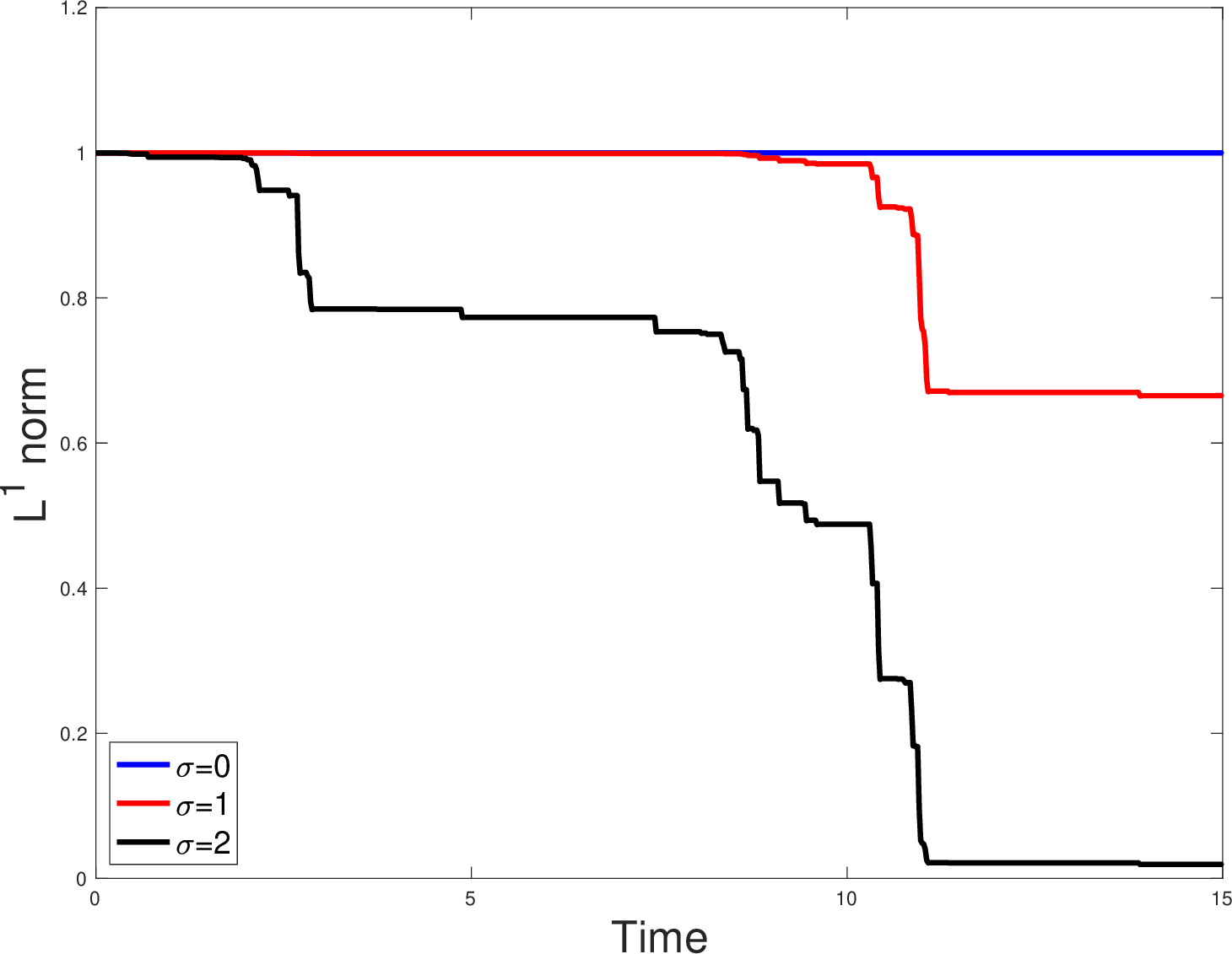}}\hspace{4em}
	\subfigure[]{\includegraphics[width=0.35\textwidth]{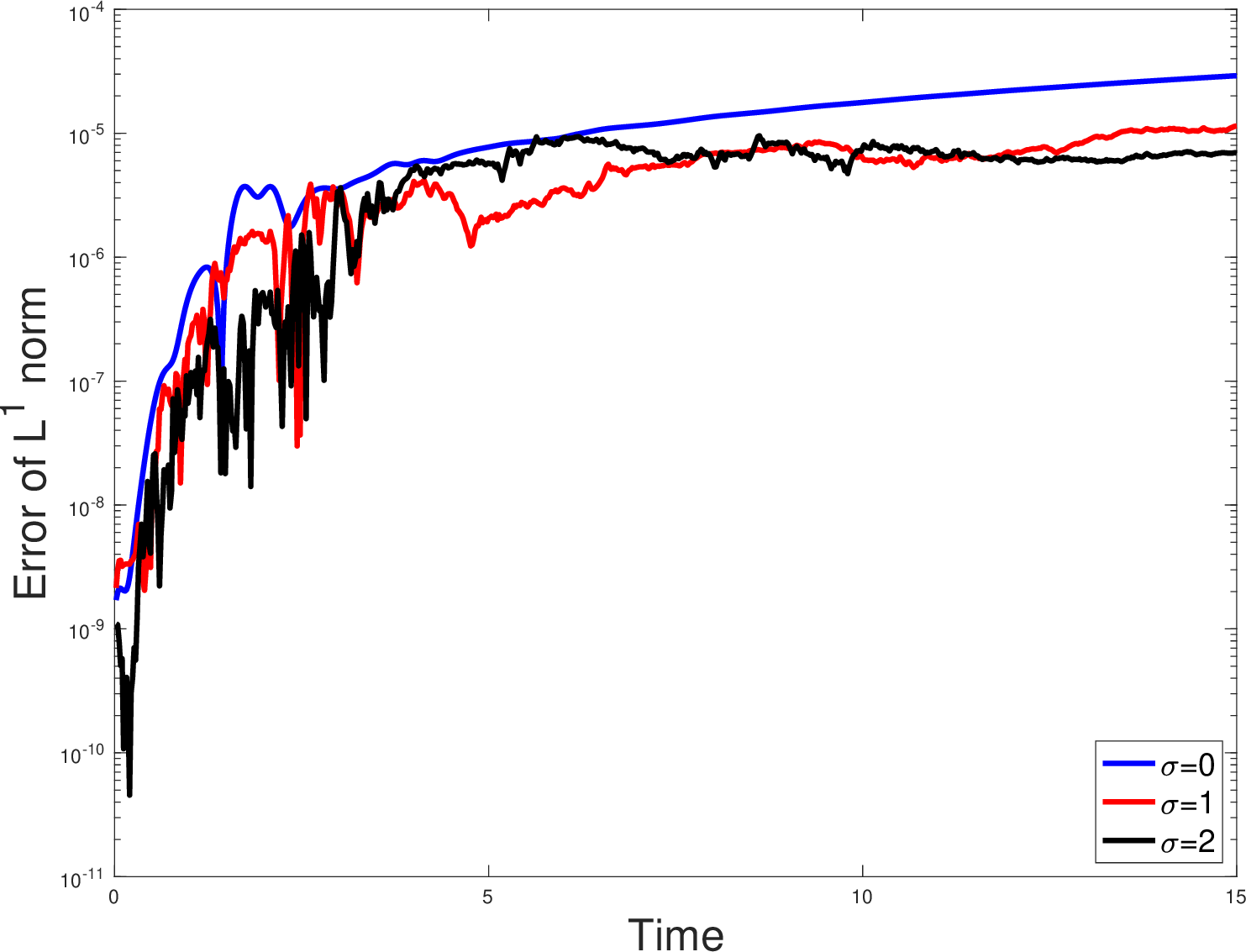}}
	\caption{\textup{(}a\textup{)} Evolution of $L^1(\mathbb T\times[-6,6])$-norm of the numerical solution of the traditional semi-Lagrangian method and \textup{(}b\textup{)} Evolution of mass error 
	of Algorithm \ref{Algo:2}, 	
	for Example \ref{exp:CA} with different $\sigma$ and the initial density \eqref{eq:TSI} in the two stream instability problem along a single sample path.
	}\label{Fig:L1} 
\end{figure}
Fig.\ \ref{Fig:L1} presents the evolution of the $L^1$-norm of the numerical solution for Example \ref{exp:CA} with different noise intensities ($\sigma\equiv0,1,2$), where $T=15$, $f_0$ is given by \eqref{eq:TSI}, and the stepsizes are $\tau=0.02$ and $\delta x=\delta v=0.01$.
In Fig.\ \ref{Fig:L1}(a), we truncate the phase space domain into $\mathbb{T} \times [-6, 6]$ and then compute the numerical solution using the traditional semi-Lagrangian method, with the value of the numerical solution outside the truncated domain being enforced to zero.
 It can be seen that the $L^1(\mathbb T\times[-6,6])$-norm of the numerical solution remains nearly invariant for $\sigma\equiv0$. However, for $\sigma(x)>0$, the $L^1(\mathbb T\times[-6,6])$-norm of the numerical solution decreases over time, which 
emphasizes the necessity of enlarging the computational domain in phase space for the stochastic problem \eqref{eq:Vla}.
In comparison, we compute the mass for Example \ref{exp:CA} by
Algorithm \ref{Algo:2} with \eqref{eq:inv-SS}.
 By taking $\epsilon_0=f_0(0,U_0)$ and $U_0=6$, we plot in
Fig.\ \ref{Fig:L1}(b) the error $|\|f^{\delta y,\tau,\epsilon_0}_n\|_{L^1(\mathbb T\times[-U_n,U_n])}-\|f^{\delta y,\tau,\epsilon_0}_0\|_{L^1(\mathbb T\times[-U_0,U_0])}|$ against time, which verifies that the mass of the numerical solution is nearly invariant over time.

 \begin{figure}[!htbp]
	\centering
	\subfigure[\footnotesize{$L^1(\mathbb{T}\times[-U_n,U_n])$-norms}]{\includegraphics[width=0.35\textwidth]{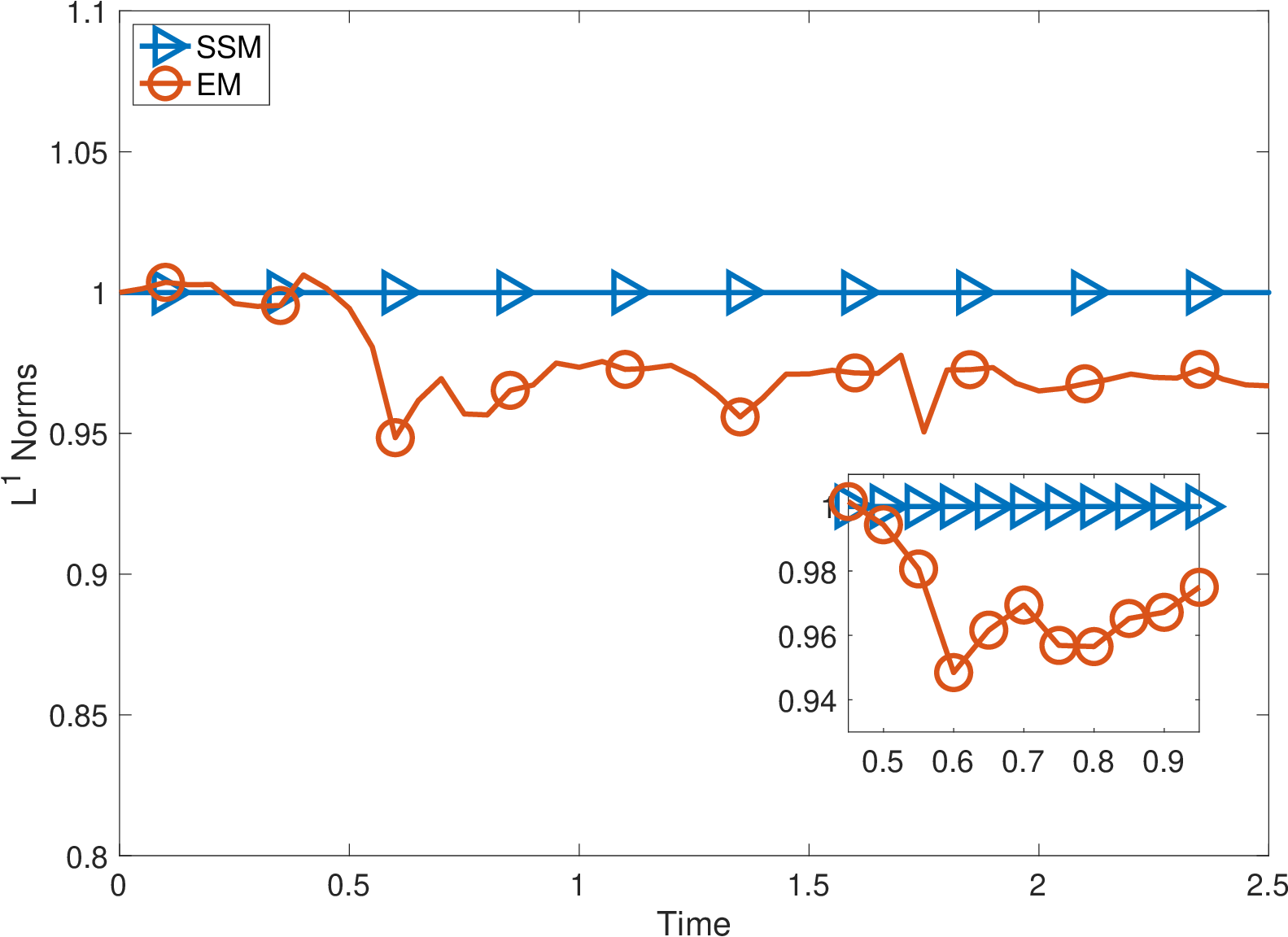}}\hspace{4em}
	\subfigure[\footnotesize{$L^2(\mathbb{T}\times[-U_n,U_n])$-norms}]{\includegraphics[width=0.35\textwidth]{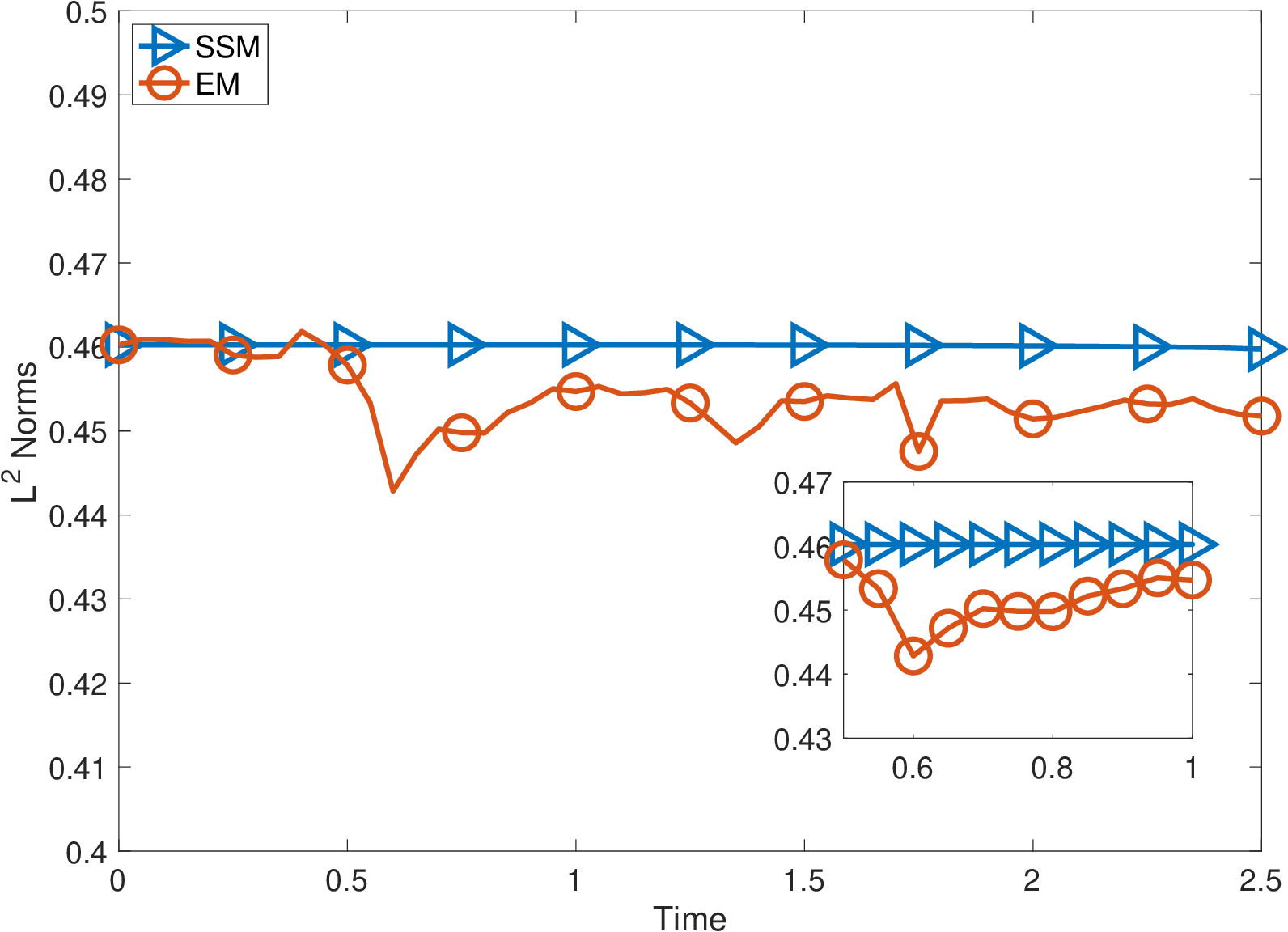}}
	 \caption{Evolution of $L^p(\mathbb{T}\times[-U_n,U_n])$-norms of numerical solutions of Algorithm \ref{Algo:2} with \eqref{eq:inv-SS} and the Euler--Maruyama method
	  for Example \ref{exp:CA} with $\sigma(x)=\sin(2\pi x)$ and the initial density \eqref{eq:TSI} in the two stream instability problem along a single sample path: $p=1$ \textup{(}a\textup{)} and $p=2$ \textup{(}b\textup{)}.
}
	\label{Fig:Norm}
\end{figure}
We 
compute
 the $L^p(\mathbb T\times[-U_n,U_n])$-norms ($p=1,2$) of the numerical solution $f_n^{\delta y,\tau,\epsilon_0}$ with $U_0=6$ and 
  $\tau=0.05$. 
The results are displayed in Fig.\ \ref{Fig:Norm} for Example \ref{exp:CA} with $\sigma(x)=\sin(2\pi x)$ and the initial density 
\eqref{eq:TSI}.
To highlight the error stemming from time discretization, relatively small phase space stepsizes $\delta x=\delta v=1/4000$ and threshold $\epsilon_0=10^{-6}$ are utilized. As displayed in Fig.\ \ref{Fig:Norm}, compared to the Euler--Maruyama method (which is not volume-preserving),  \eqref{eq:inv-SS} exhibits a better performance in preserving $L^p$-norms of the numerical solution.

\subsection{Evolution of physical quantities}\label{S5.4}
In this subsection, we plot the evolution of physical quantities
by Algorithm \ref{Algo:2} with the spline interpolation and \eqref{eq:inv-SS} to
verify the results in Proposition \ref{tho:Hft}. We note that the spline interpolation offers higher computational accuracy compared to the Lagrange first-order interpolation, but it does not preserve positivity (see, e.g., \cite{BM08}).
\begin{figure}[!htbp]
\centering
\subfigure[]
 { \includegraphics[width=0.32\textwidth]{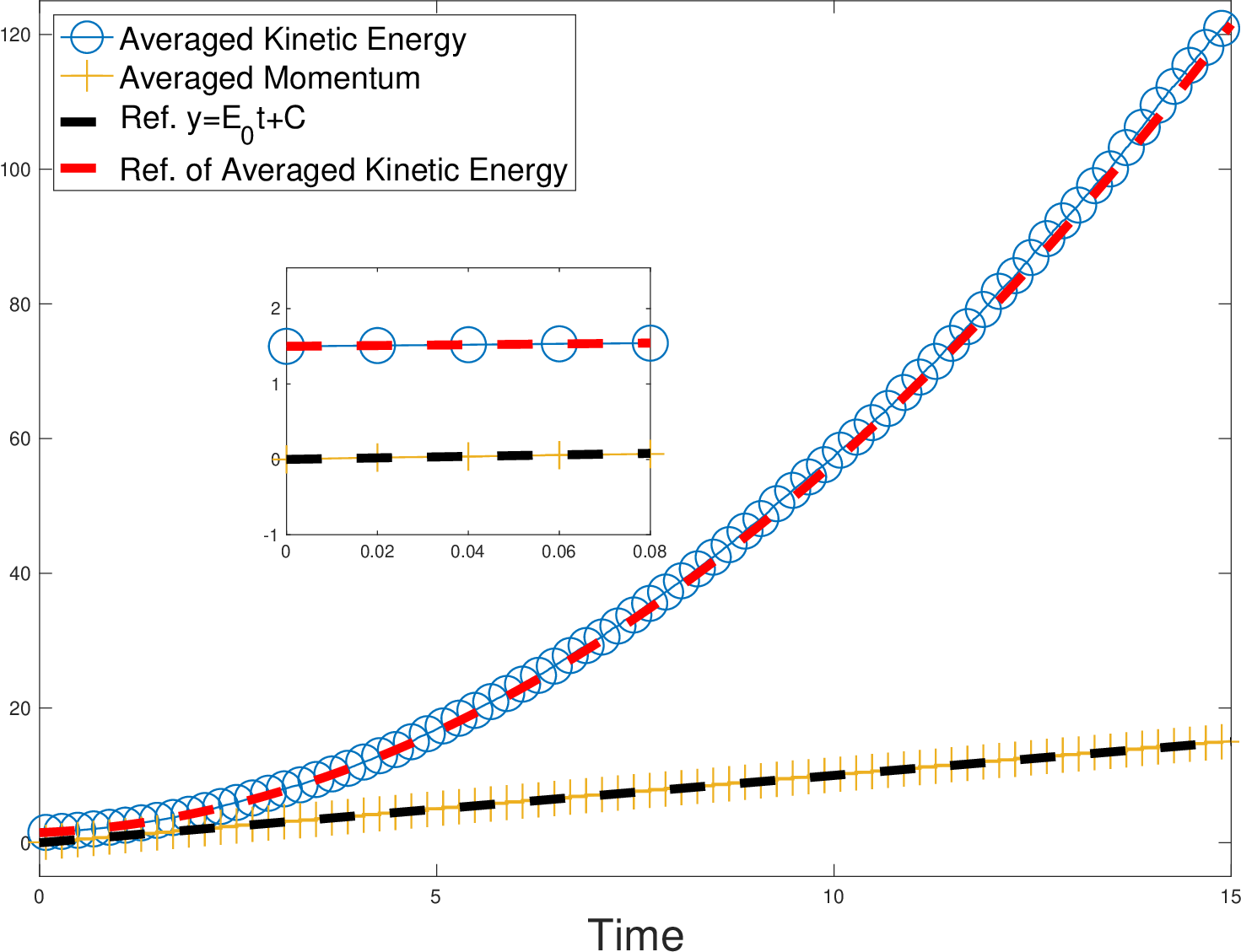}}
 \subfigure[]
 {\includegraphics[width=0.32\textwidth]{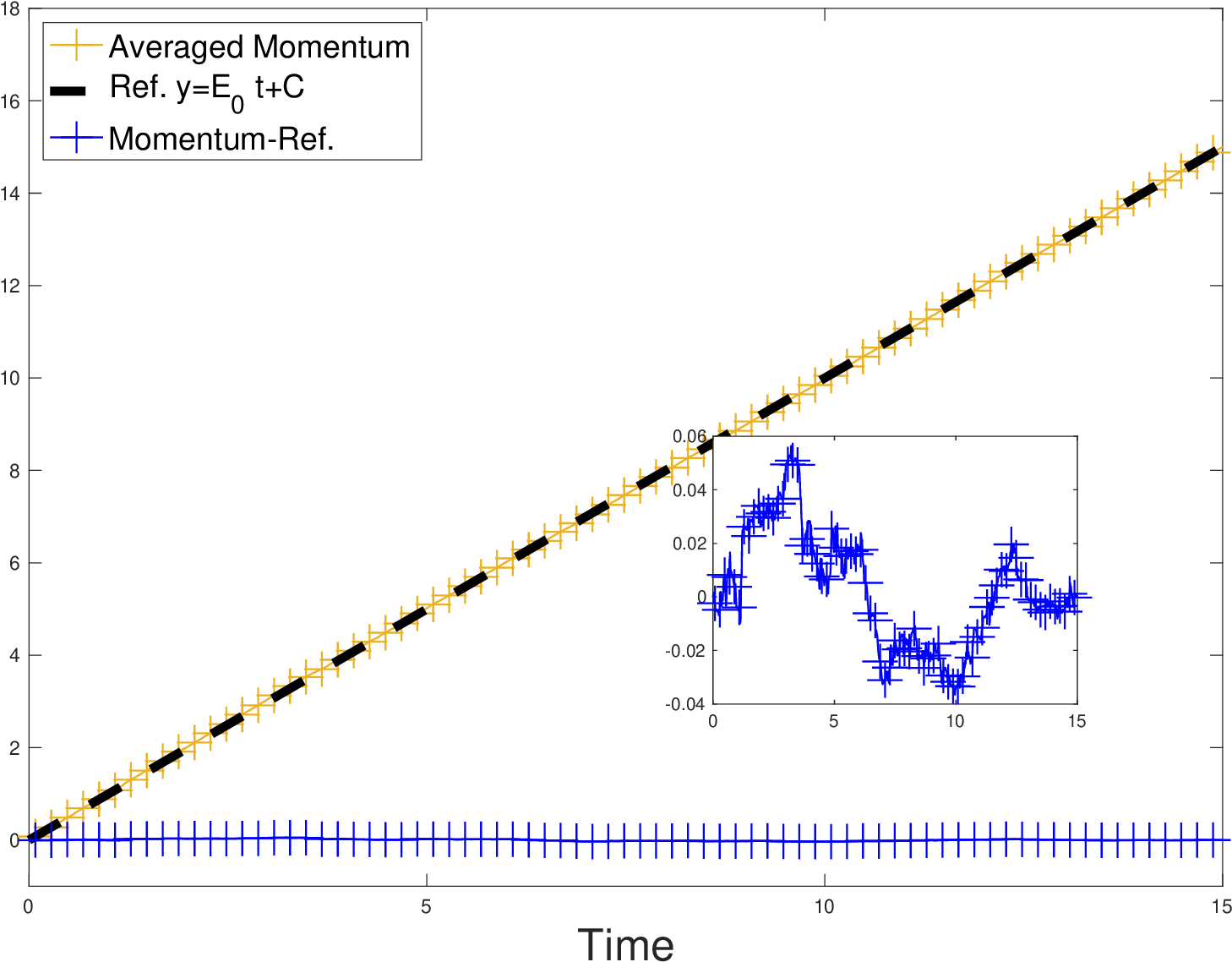}}
\subfigure[]
 {\includegraphics[width=0.32\textwidth]{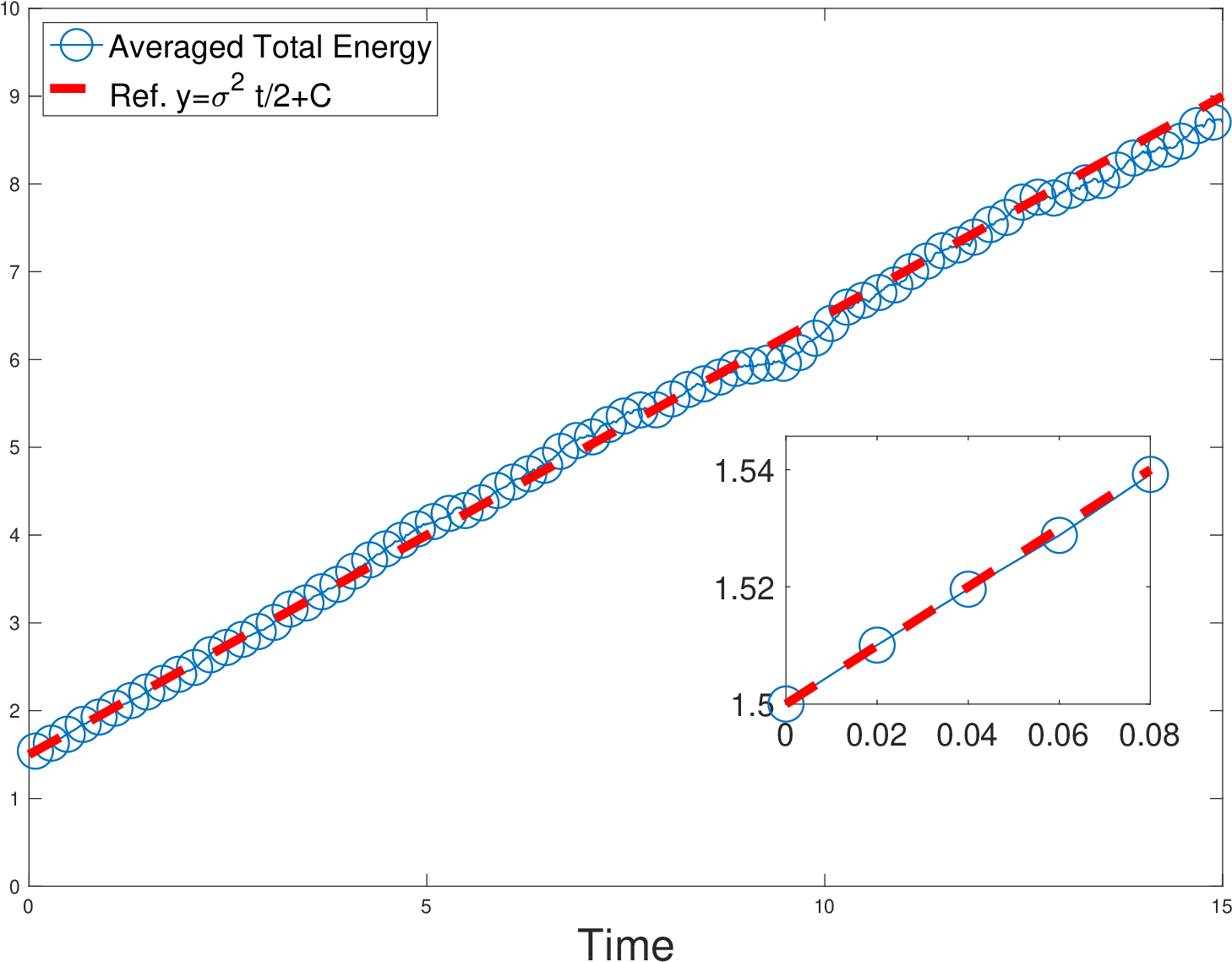}}
 \caption{Evolution of physical quantities for Example \ref{exp:CA} with the initial density \eqref{eq:TSI} in the two stream instability problem: $E\equiv E_0=1$ and $\sigma\equiv 1$ \textup{(}a\textup{)}, $E\equiv E_0=1$ and $\sigma(x)=0.5(\cos(2\pi x)+1)$ \textup{(}b\textup{)}, and $E(x)=\cos(2\pi x)$ and $\sigma \equiv 1$ \textup{(}c\textup{)}.}\label{Fig:EPC} 
\end{figure} 

\begin{figure}[!htbp]
\centering
 \subfigure[]
 {\includegraphics[width=0.4\textwidth]{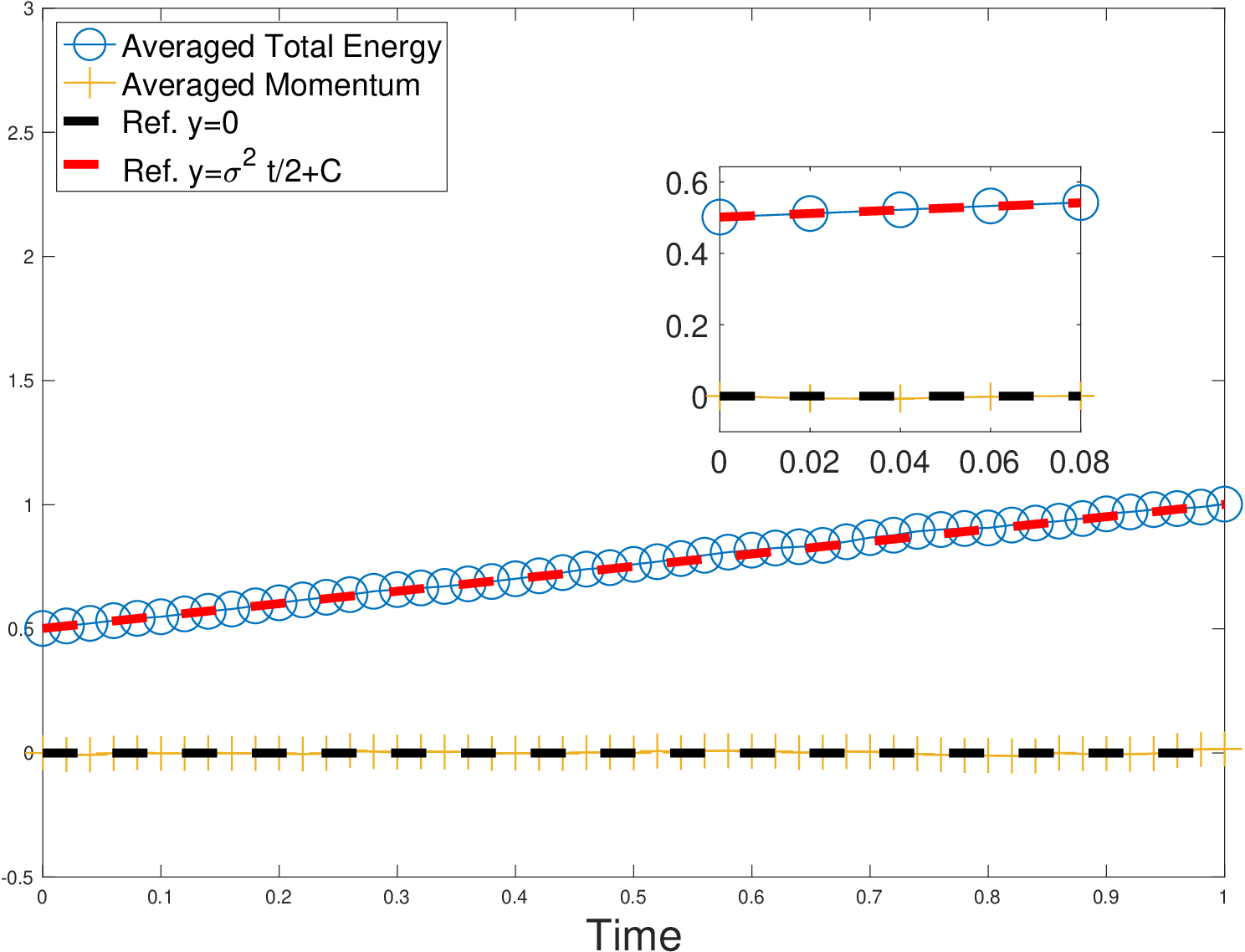}}\hspace{4em}
 \subfigure[]
 {\includegraphics[width=0.4\textwidth]{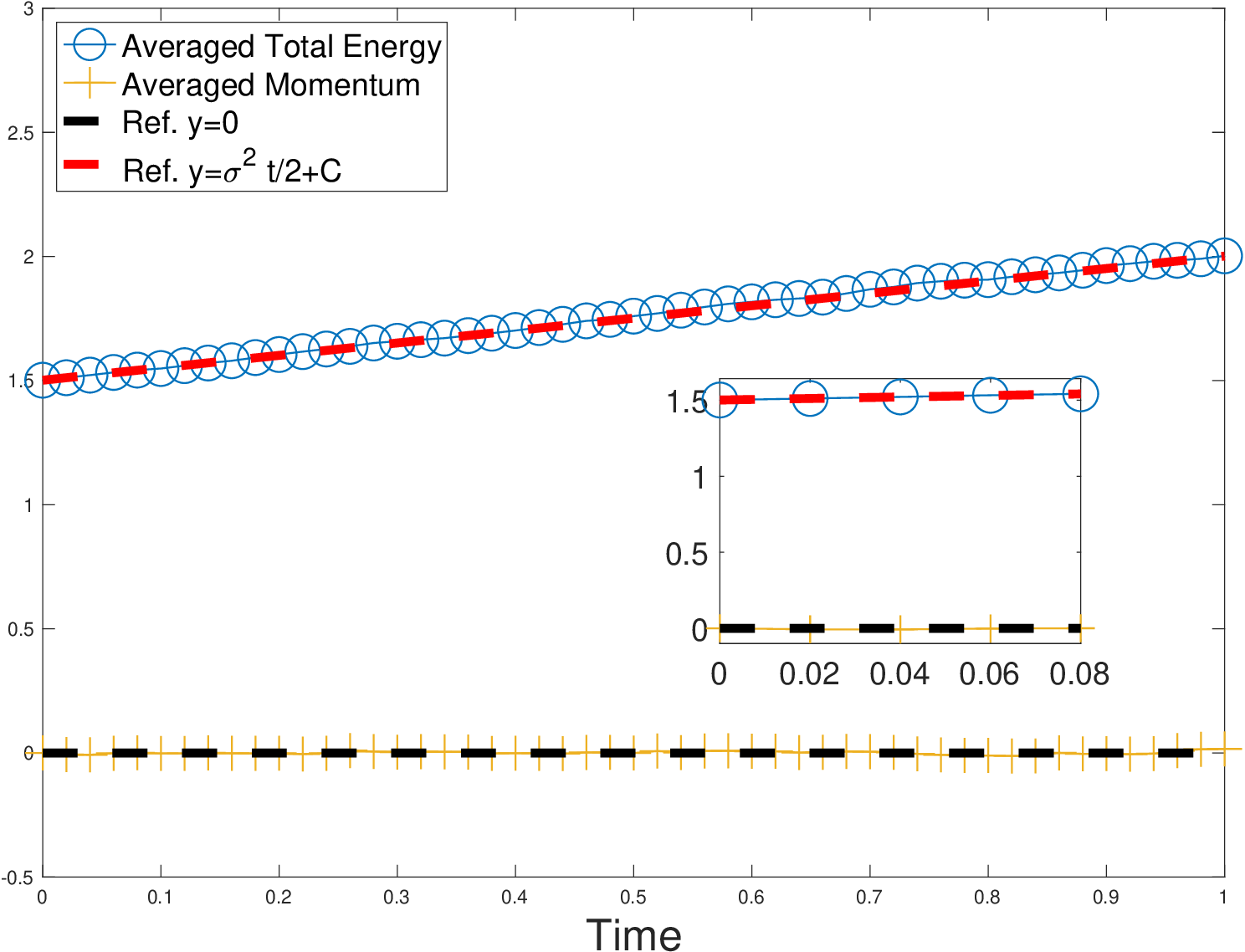}} \caption{Evolution of physical quantities for Example \ref{exam:VP} with $\sigma\equiv1$: the initial density \eqref{eq:LLDP} in the linear Landau damping problem \textup{(}a\textup{)} and the initial density \eqref{eq:TSI} 
 in the two stream instability problem \textup{(}b\textup{)}. 
}\label{Fig:EPCVP}
\end{figure}

Figs.\ \ref{Fig:EPC}-\ref{Fig:EPCVP} display the evolution of the averaged momentum $\E[\mathcal P[f_n^{\delta y,\tau,\epsilon_0}]]$, averaged kinetic energy $\E[\mathcal K[f_n^{\delta y,\tau,\epsilon_0}]]$, and averaged
total energy $\E[\mathcal H[f_n^{\delta y,\tau,\epsilon_0}]]$. 
 The expectations are realized by the average of 1000 sample paths.
 In these tests, we take
$U_0=6$, $\epsilon_0=10^{-6}$, $\tau=0.02$, and $\delta x=\delta v=0.01$. We observe from Fig.\ \ref{Fig:EPC} that
\begin{itemize}
\item[(1)] When $E$ and $\sigma$ are constants, the averaged kinetic energy grows nearly quadratically over time (see Fig.\ \ref{Fig:EPC}(a)), verifying \eqref{eq:Kenergy} in Proposition \ref{tho:Hft}(i);
\item[(2)] When $E$ is a constant, the averaged momentum exhibits nearly linear growth with respect to time (see Fig.\ \ref{Fig:EPC}(a)-(b)), validating \eqref{eq:momentum} in Proposition \ref{tho:Hft}(i);
\item[(3)] When $E=-\nabla u$ with $u(x)=\sin(2\pi x)$ and $\sigma$ is a constant, Fig.\ \ref{Fig:EPC}(c) verifies that random forces 
cause the averaged total energy to increase linearly over time, which is consistent with Proposition \ref{tho:Hft}(ii). 
\end{itemize}
Fig.\ \ref{Fig:EPCVP} shows that when $\sigma$ is a constant, the averaged total energy of the stochastic Vlasov--Poisson equation grows linearly with respect to time with the slope $\sigma^2/2$, while the averaged momentum remains invariant.
This coincides with Proposition \ref{tho:Hft}(iii).

\section*{Acknowledgments}
We would like to thank Prof. Arnulf Jentzen (CUHK-Shenzhen, University of Münster) and Dr. Yingzhe Li (Max Planck Institute for Plasma Physics)
 for their valuable suggestions and discussions.

\bibliographystyle{plain}
\bibliography{MAVF}
\end{document}